\newtheorem{theorem}{Theorem}[section]
\newtheorem{corollary}[theorem]{Corollary}
\newtheorem{lemma}[theorem]{Lemma}
\newtheorem{remark}[theorem]{Remark}
\title[Spectral Fractional Laplacian with Inhomogeneous Dirichlet Data]{Spectral Fractional Laplacian with Inhomogeneous Dirichlet Data: Questions, Problems, Solutions}
 \author
 {Stanislav Harizanov}
\address{Institute of Information and Communication Technologies\\
Bulgarian Academy of Sciences\\
Acad. G. Bontchev Str., Block 25A\\
1113 Sofia, BULGARIA}
\email{sharizanov@parallel.bas.bg}
 \author{ Svetozar Margenov}
 \address{Institute of Information and Communication Technologies\\
Bulgarian Academy of Sciences\\
Acad. G. Bontchev Str., Block 25A\\
1113 Sofia, BULGARIA}
\email{margenov@parallel.bas.bg}
 \author{Nedyu Popivanov}
 \address{Institute of Information and Communication Technologies\\
Bulgarian Academy of Sciences\\
Acad. G. Bontchev Str., Block 25A\\
1113 Sofia, BULGARIA}
\email{nedyu@parallel.bas.bg}
\begin{document}



 \bigskip \medskip

 \begin{abstract}In this paper we discuss the topic of correct setting for
the equation $(-\Delta )^s u=f$, with $0<s <1$. The definition of
the fractional Laplacian on the whole space $\mathbb R^n$, $n=1,2,3$ is
understood through the Fourier transform, see, e.g., Karniadakis et.al. (arXiv, 2018). The
real challenge however represents the case when this equation is
posed in a bounded domain $\Omega$ and proper boundary conditions
are needed for the correctness of the corresponding problem. Let us
mention here that the case of inhomogeneous boundary data has been
neglected up to the last years. The reason is that imposing nonzero
boundary conditions in the nonlocal setting is highly nontrivial.
There exist at least two different definitions of fractional Laplacian, and there is still
ongoing research about the relations of them. They are not equivalent.
The focus of our study is a  new characterization of the spectral fractional Laplacian.
One of the major contributions concerns the case when the right hand side $f$ is
a Dirac $\delta$ function.
For comparing the differences between the solutions in the spectral and Riesz formulations, we consider an inhomogeneous fractional Dirichlet problem.
The provided theoretical  analysis is supported by model numerical
tests. 
\end{abstract}

\maketitle



\section{Introduction}\label{sec:intro}

\subsection{Riesz formulation}\label{subsec:11}
In the case of definition based on the Riesz potential (”Riesz formulation”) the
fractional Laplacian operator is introduced as below. For
$s \in (0,1)$, it is defined as

\begin{equation}\label{equation-FL}
 \left( -\Delta\right)^s u\left( x \right) =
 C\left( n, s \right) P.V. \int_{\mathbb{R}^n} \frac{u \left( x\right) - u \left( y \right)}{|x-y|^{n+2s}} dy,
\end{equation}
where $C\left( n, s\right)$ is a normalized constant.

In this setting, the corresponding homogeneous boundary value problem is:
\begin{equation} \label{eq:Riesz Laplace}
(-\Delta)^s u = f \quad in \quad \Omega\subset\mathbb R^n,\qquad u = 0 \quad in \quad \mathbb{R}^n \backslash \Omega. 
\end{equation}
For more information see \cite{claudia,bucur,oton,acosta,Gunzburger,Tersian}.

\subsection{Spectral formulation}

Unlike the Riesz definition \eqref{equation-FL} of the operator $(-\Delta)^s$, in its spectral formulation for zero Dirichlet boundary conditions we have the following (see \cite{Caff2,Caff}):
\begin{equation}    \label{eq6}
(-\Delta_{\Omega,0})^s u(x) := \sum_{k=1}^{\infty} (\lambda_k)^s (u, e_k)_{L_2
(\Omega)} e_k(x),
\end{equation}
where $\lambda_k$ and $e_k(x)$ are the corresponding eigenvalues and
eigenfunctions of the classical Dirichlet problem for the Laplacian:
\begin{equation}    \label{eq7}
-\Delta e_k = \lambda_k e_k \quad in \quad \Omega ,\qquad
e_k = 0 \quad on \quad \partial \Omega .
\end{equation}

In this setting, the related non-local elliptic problem is:
\begin{equation}    \label{eq:Spectral Laplace}
(- \Delta_{\Omega,0})^s u = f \quad in \quad \Omega, \qquad u|_{\partial \Omega} = 0.
\end{equation}

Note that, both problem formulations \eqref{eq:Riesz Laplace} and \eqref{eq:Spectral Laplace} are well-posed, regardless the difference that, in the first case the data are given in the whole domain $\mathbb{R}^n\setminus\Omega$, while in the second case - only on the boundary $\partial\Omega$. The main goal of the paper is to analyze the behavior of the exact solutions (which are in general different for the two approaches) for various fractional powers $s\in(0,1)$. Two examples are considered. The first one deals with constant right-hand-side $f$ and the solutions exhibit interface layers, due to the homogeneous Dirichlet boundary conditions. The steepness of the layers strongly depends on the problem formulation (\eqref{eq:Riesz Laplace} or \eqref{eq:Spectral Laplace}) and on the value of $s$. The second one deals with Dirac delta right-hand-side and the analysis here is based on inhomogeneous Dirichlet boundary conditions, since we force the spectral solution to coincide with the Riesz one on the boundary. Because of the singularity in the right-hand-side, the solutions also exhibit singularities. Such lack of regularity disable the usage of classical analysis in this setup.

The paper is organized as follows.

In Section~\ref{sec:2} we discuss the difference between solutions in the simple (but important) 1D case for both formulations, when the right-hand-side is $f \equiv 1$ on $(-1,1)$. One of the main
differences is in the behavior of both solutions around the boundary
of the domain. More precisely, the solution of the ''Riesz
formulation'' behaves like $[ dist (x, \partial \Omega)]^s$ around
$\partial \Omega$ (see \cite{oton}), but in the ''spectral formulation''  the behavior of
solution is quite different (see \cite{Caff}). The considered example highlights some general results of Caffarelli and Stinga. Furthermore, an open problem regarding the boundary layer asymptotic in the spectral case for $s=1/2$ is formulated.

In Section~\ref{sec3} we compare again the solutions
in both cases (Riesz and Spectral) but for the right-hand side the
Dirac $\delta_0$ function, concentrated at the origin $(0, ... , 0)$. Note that, this is a quite delicate setup, since the distribution $\delta_0$ does not belong to the classical functional spaces. We use here the definition of the inhomogeneous Dirichlet  spectral
problem (see \cite{antil, anna}). All necessary definitions and some short explanations are provided. Also some comparison between the two solutions is given.

In Section~\ref{sec:numtests} the derived theoretical results are numerically studied. 

\section{Homogeneous Dirichlet conditions}\label{sec:2} 
In this section, we consider the case of right-hand-side $f
\equiv 1 $. Let $\Omega $ be the unit ball $B_{1} = \{ \|x\| < 1 \} $, where we consider the Euclidean distance in $\mathbb{R}^n$. The corresponding solution of  the "Riesz formulation" \eqref{eq:Riesz Laplace} (see
\cite{oton}) is :
\begin{equation}    \label{eq3}
u_s^R (x) = c(n,s) (1 - \|x\|^2)^s, \qquad ||x|| < 1,
\end{equation}
where
$$c(n,s)=\frac{2^{-2s}\Gamma(n/2)}{\Gamma((n+2s)/2)\Gamma(1+s)}.$$
 Obviously, the behavior of the solution
around the boundary is
\begin{equation}\label{eq:Boundary Riesz}
u_s^R(x) \sim [dist(x, \partial B_1)]^s.
\end{equation}
It is clear that $u_s^R\in C^s$ near the boundary but it is not in $C^\alpha$ for any $\alpha>s$.

Quite different is the situation for the "spectral formulation" \eqref{eq:Spectral Laplace}. Here, we prefer for simplicity to focus only on the 1D case, which we investigate in detail. 

The corresponding  eigenvalues and
orthonormalized eigenfunctions of the Laplace operator $-\Delta$ (see \eqref{eq7}) are:
\begin{equation}    \label{eq9}
\lambda_k = \left( \frac{k \pi}{2} \right)^2, \quad e_k(x) = \sin\left[
\frac{k \pi}{2} (x+1)\right], \quad k = 1,2...
\end{equation}
Therefore, equation \eqref{eq6} reads as:
\begin{equation}    \label{eq10}
(-\Delta_{\Omega,0})^s u := \sum_{k=1}^{\infty} \left(\frac{k \pi}{2}
\right)^{2s} (u, e_k)_{L_2 (B_1)} e_k(x).
\end{equation}
If
\begin{equation}    \label{eq11}
f \equiv 1 \equiv \sum_{k=1}^{\infty} (1, e_k)_{L_2 (B_1)} e_k(x)
\equiv \sum_{m=0}^{\infty} \frac{4}{(2m + 1) \pi} e_{2m + 1}(x),
\end{equation}
then, comparing \eqref{eq10} with \eqref{eq11}, it follows that $ (u, e_k) = 0$ for $k=2m$ and
\begin{equation*}
(u, e_{2m + 1}) =
2\left(\frac{2}{\pi}\right)^{2s+1}\frac{1}{(2m+1)^{2s+1}}, \;\; m =
0,1,2...
\end{equation*}
Thus,
\begin{equation}    \label{eq12}
u_s(x)= 2\left(\frac{{2}}{\pi}\right)^{2s+1} \sum_{m=0}^{\infty}
\frac{1}{(2m+1)^{2s+1}} \sin \left[ \frac{(2m+1) \pi}{2}  (x+1)
\right].
\end{equation}
Let us compare the "solutions" in the two cases. For the "spectral formulation", the behavior of the solution \eqref{eq12}
around the boundary $|x| = 1$ is quite different than \eqref{eq3} for the "Riesz
formulation" (see Section~\ref{subsec:11}). Also, we give this simple example to illustrate the following
Caffarelli - Stinga result (see \cite{Caff})
\begin{theorem}\label{thm:Caffarelli}
(Boundary regularity for $f$ in $C^{\alpha}$ - Dirichlet). Assume
that $\Omega$ is a bounded domain and that $f \in C^{0,
\alpha}(\bar{\Omega})$, for some $0 < \alpha < 1$. Let $u$ be a
solution to \eqref{eq:Spectral Laplace}.

(a) Suppose that $0 < \alpha + 2s < 1$, $\Omega$ is a $C^1$ domain.
Then
$$u(x) \sim dist(x, \partial \Omega)^{2s}+v(x), \;\; \text{for}\;\;x\;\; \text{close to} \;\; \partial \Omega,$$
where $v \in C^{0,\alpha + 2s }(\overline{\Omega})$

(b) Suppose that $s > 1/2,\, 1< \alpha +2s <2,$ $\Omega$ is a
$C^{1,\alpha + 2s -1}$ domain. Then
$$u(x) \sim dist(x, \partial \Omega)+v(x), \;\; \text{for}\;\; x \;\; \text{close to} \;\; \partial \Omega,$$
where $v \in C^{1,\alpha + 2s-1 }(\overline{\Omega}).$

(c) If $s = \frac{1}{2}$ then
$$u(x) \sim dist (x, \partial \Omega)| \ln \ dist(x, \partial \Omega)| + w(x), \quad for \ x \ close \ to \ \partial \Omega,$$
where $w \in C^{1, \alpha} (\overline{\Omega})$.

In both cases, if $f(x_0) = 0$ for some $x_0\in\partial\Omega$, then $u(x_0) = v(x_0)$ (resp. $u(x_0) = w(x_0)$) and $u$ has the same regularity as $v$ (resp. $w$) at $x_0\in\partial\Omega$.
\end{theorem}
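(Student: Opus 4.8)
The plan is to localize the nonlocality by passing to the Caffarelli--Silvestre / Stinga--Torrea extension, and then to read off the boundary asymptotics from an explicit, separated-variables analysis near the edge of the extension cylinder. Recall that the solution $u$ of \eqref{eq:Spectral Laplace} is the trace $u(x)=U(x,0)$ of the solution of the degenerate elliptic problem
\begin{equation*}
\operatorname{div}\bigl(y^{1-2s}\nabla U\bigr)=0\ \text{ in }\ \Omega\times(0,\infty),\qquad U=0\ \text{ on }\ \partial\Omega\times[0,\infty),\qquad -d_s\lim_{y\to0^+}y^{1-2s}\partial_yU(\cdot,y)=f\ \text{ in }\ \Omega.
\end{equation*}
Thus $\partial\Omega$ becomes the edge where the lateral Dirichlet face meets the base carrying the conormal datum $f$, and the statement is a genuinely \emph{local} regularity question for $U$ near that edge. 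Since $y^{1-2s}$ is a Muckenhoupt $A_2$ weight, Caccioppoli estimates and the De Giorgi--Nash--Moser and Schauder theory for the weighted operator $\operatorname{div}(y^{1-2s}\nabla\,\cdot\,)$ are at our disposal; this supplies the interior and flat-base estimates.

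First I would localize and flatten: near a fixed $x_0\in\partial\Omega$, change variables so that $\Omega$ becomes $\{x_n>0\}$ and the cylinder becomes the quarter-space $\{x_n>0\}\times\{y>0\}$, a planar wedge of opening $\pi/2$ in the $(x_n,y)$-plane. Under the hypotheses on $\partial\Omega$ in (a)--(c) the flattening perturbs $\operatorname{div}(y^{1-2s}\nabla\,\cdot\,)$ only by lower-order terms that preserve the weight structure. I would then split $f=f(x_0)+g$ with $g(x_0)=0$ and $g\in C^{0,\alpha}$ near $x_0$, and handle the constant part and the Hölder part separately.

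For the constant part, pass to polar coordinates $x_n=\rho\cos\phi$, $y=\rho\sin\phi$, $\phi\in(0,\pi/2)$, and look for homogeneous solutions $U=\rho^{\gamma}\Phi(\phi)$ of the wedge equation. A short computation reduces this to the angular ODE $\Phi''+(1-2s)\cot\phi\,\Phi'+\gamma(\gamma+1-2s)\Phi=0$ with $\Phi(\pi/2)=0$; near $\phi=0$ the indicial exponents are $0$ and $2s$, so $\Phi(\phi)=c_0+c_1\phi^{2s}+\cdots$, and one checks that such a $U$ has trace $c_0\,x_n^{\gamma}$ on $y=0$ and conormal datum proportional to $c_1\,x_n^{\gamma-2s}$. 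Matching a constant conormal datum forces $\gamma=2s$ (the source-driven ``anomalous'' exponent), whereas purely homogeneous data admit $\gamma=1$ with $\Phi=\cos\phi$, i.e.\ $U=x_n$ (the ``regular'' exponent). Hence $u$ has, near $x_0$, an expansion $u(x)=\kappa\,f(x_0)\operatorname{dist}(x,\partial\Omega)^{2s}+(\text{regular }\operatorname{dist}\text{-term})+(\text{remainder})$ with $\kappa\neq0$. Subtracting the singular profile and applying the weighted Schauder estimates to the remainder, now driven by $g\in C^{0,\alpha}$ with $g(x_0)=0$, gains $2s$ orders of regularity and puts it in $C^{0,\alpha+2s}(\overline\Omega)$ as long as $\alpha+2s<1$; since $\operatorname{dist}^{2s}\notin C^{0,\alpha+2s}$ while the $\operatorname{dist}^{1}$-term is smoother than that, this is exactly (a). When $1<\alpha+2s<2$ (so $s>1/2$) the anomalous term $\operatorname{dist}^{2s}$ is now $C^{1}$ and the regular $\operatorname{dist}^{1}$-term dominates the boundary layer, while the $g$-driven remainder lies in $C^{1,\alpha+2s-1}$; that is (b). Finally $s=1/2$ is precisely the resonance: the anomalous exponent $2s=1$ collides with the regular exponent $1$, equivalently the two indicial roots at $\phi=0$ differ by the integer $1$, so the second Frobenius solution carries a logarithm and the anomalous profile becomes $\operatorname{dist}\,|\ln\operatorname{dist}|$, which is (c) with $w\in C^{1,\alpha}$. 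The last assertion is then immediate, since the coefficient of the singular profile equals $\kappa f(x_0)$: if $f(x_0)=0$ it vanishes, and $u$ coincides near $x_0$ with $v$ (resp.\ $w$), inheriting its regularity there.

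I expect the main obstacle to be the weighted elliptic regularity \emph{up to the edge} $\partial\Omega\times\{0\}$, where the homogeneous Dirichlet face meets the weighted-Neumann base: one must prove $A_2$-weighted Caccioppoli and Schauder-type estimates that are uniform across this corner and then justify rigorously that $U$, and hence $u$, splits as ``explicit singular profile $+$ remainder of the claimed class'', with the remainder's seminorms controlled by the modulus of continuity of $f$ and by the geometry of $\partial\Omega$. The borderline situations — $\alpha+2s=1$, and especially $s=1/2$ — are the delicate ones, because there the relevant exponents of the wedge problem coincide and the logarithmic corrections must be produced and tracked through a careful analysis of the angular ODE rather than by a soft perturbation argument; the same care is needed to exclude further logarithmic terms in cases (a) and (b).
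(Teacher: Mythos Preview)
The paper does not prove this theorem at all: Theorem~\ref{thm:Caffarelli} is quoted verbatim from Caffarelli--Stinga \cite{Caff} and is invoked only as background. What the paper actually does in Section~\ref{subsec:21} is \emph{verify} the predicted boundary behavior in the single explicit example $n=1$, $\Omega=(-1,1)$, $f\equiv1$, by direct manipulation of the Fourier sine series \eqref{eq12}: for $s>1/2$ it bounds $(1+x)^{-1}|u_s(x)|$ term-by-term; for $0<s<1/2$ it bounds $(1+x)^{-2s+\epsilon}|u_s(x)|$ using $|\sin\theta|\le|\theta|^{2s-\epsilon}$; and for $s=1/2$ it sums the second derivative of the partial sums in closed form via \eqref{eq15.2}, integrates twice, and isolates the $y|\ln y|$ behavior. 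No attempt is made to prove the general statement.

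Your proposal, by contrast, sketches a genuine proof of the full theorem via the Caffarelli--Silvestre/Stinga--Torrea extension, flattening of $\partial\Omega$, the angular ODE on the quarter-plane wedge, and $A_2$-weighted Schauder estimates up to the edge. That is exactly the machinery of the original paper \cite{Caff}, and your identification of the mechanism---the collision of the anomalous exponent $2s$ with the regular exponent $1$ at $s=1/2$ producing the logarithm---is correct. So your approach is not ``different from the paper's proof''; rather, the paper has no proof, and you are reproducing the strategy of the cited source. The honest obstacles you flag (uniform weighted Caccioppoli/Schauder estimates across the mixed Dirichlet--conormal edge, and the careful Frobenius analysis at resonance) are indeed the substantive content of \cite{Caff}, and your sketch would need those filled in to stand on its own.
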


\subsection{Analysis of solution \eqref{eq12}}\label{subsec:21}

Because of the
symmetry, it is enough to study the behavior of the solution
\eqref{eq12} only around $x=-1$. Actually, the case $\mathbf{a) \ s>\frac{1}{2}} $ is obvious,
because $u_s \in C^1(\overline{\Omega})$. Indeed, 
\begin{equation*}
\left| \sin \left[\frac{(2m+1) \pi}{2} (x+1)\right] \right| \leq \frac{(2m+1)
\pi}{2} (x+1)
\end{equation*}
and, thus
\begin{equation*}
(1+x)^{-1} |u_s (x)| \leq 2\left(\frac{{2}}{\pi}\right)^{2s+1}  \sum_{m=1}^\infty
\frac{1}{(2m+1)^{2s}} < +\infty.
\end{equation*}
In the case $\mathbf{b) \ 0< s < \frac{1}{2}}$ we have: for any
$\epsilon, \ 0< \epsilon < 2s $
\begin{align*}
&(1+x)^{-2s+ \epsilon} \left| \sin \left[ (2m+1) \frac{\pi}{2} (x+1) \right] \right| \leq \\
&\leq (1 +x)^{-2s + \epsilon} \left[ (1+x)(2m+1) \frac{\pi}{2} \right]^{2s- \epsilon} \left| \sin \left[ (2m+1) \frac{\pi}{2} (x+1) \right] \right|^{1-2s + \epsilon} \leq \\
&\leq \left[ (2m+1) \frac{\pi}{2} \right]^{2s - \epsilon}.
\end{align*}
Then
\begin{equation*}
(1+x)^{-2s+ \epsilon} | u_s(x)| \leq \frac{4}{\pi}
\sum_{m=0}^{\infty} \frac{1}{(2m+1)^{1+ \epsilon}} < +\infty
\end{equation*}
The most interesting case is $\mathbf{c) \ s = \frac{1}{2}}$. Now,
the solution of problem \eqref{eq:Spectral Laplace} with $f \equiv 1$
in $\Omega = (-1, 1)$ is given by \eqref{eq12}, i.e.
\begin{equation} \label{eq13}
u_{1/2} (x) = \frac{8}{\pi^2} \sum_{m=0}^{\infty}
\frac{1}{(2m+1)^2} \sin \left[(2m+1) \frac{\pi}{2} (x+1) \right].
\end{equation}
We are interested in the behavior of $u_{1/2} (x)$ around
the boundary $x= \pm 1 $. Because of the symmetry in \eqref{eq13} it
is enough to study the behavior of the function
\begin{equation} \label{eq14}
v(y) := \frac{8}{\pi^2} \sum_{m=0}^{\infty} \frac{1}{(2m+1)^2} \sin
\left[ (2m+1) \frac{\pi}{2} y \right]
\end{equation}
around $y=0$. Denoting
\begin{equation} \label{eq15}
v_{N} (y) := 2 \sum_{m=0}^{N-1} \frac{4}{\pi^2(2m+1)^2 } \sin \left[
(2m+1) \frac{\pi}{2} y \right]
\end{equation}
we find
\begin{equation}\label{eq15.2}
v_{N} '' (y) = -2 Im  \left\{  e^{i \frac{\pi}{2} y}
\sum_{m=0}^{N-1} e^{i m \pi y} \right\} = - \frac{1 - cos (N \pi
y)}{\sin(\frac{\pi}{2} y)}.
\end{equation}
Using from \eqref{eq15} that $v_{N} (0) = v_{N} ' (1) = 0 $, finally
we find
\begin{align*}
\begin{split}
&v_{N} (y) = \int_0^y \left[ \int_{\lambda}^1 \frac{1-cos(N \pi t)}{\sin(\frac{\pi}{2} t)} dt \right] d \lambda = \\
&= \int_0^y \frac{t [1 - cos(N \pi t )]}{\sin(\frac{\pi}{2}t)} dt + y
\int_y^1 \frac{1 - cos(N \pi t)}{\sin (\frac{\pi}{2} t)} dt =: v_N^1
(y) + v_N^2 (y).
\end{split}
\end{align*}
Obviously $\sin(\frac{\pi}{2} t) \geq \frac{1}{2}t, \ t \in (0,1) $
and thus $ y^{-1} v_N^1 (y) \leq 4$. From another side
\begin{equation*}
y^{-1} v_N^2 (y) \;_{\overrightarrow{y \to +0}}\; \int_0^1 \frac{1-
cos(N \pi t)}{\sin(\frac{\pi}{2} t)} dt = v_N'(0) = \sum_{m=0}^{N-1}
\frac{1}{(2m+1)} \; {}_{\overrightarrow{N \to \infty}}\; \infty.
\end{equation*}
This means that the behavior of $v(y) \equiv u_{1/2} (y-1)$
is not like $ dist(x, \partial \Omega)$, as for $s > \frac{1}{2}$.
We could prove very easy the common result of \cite{Caff} in this
case:
\begin{equation*}
\frac{v_N^2(y)}{y |\ln y |} = \frac{1}{|\ln y|} \int_y^1 \frac{1 -
cos(N \pi t)}{\sin(\frac{\pi}{2}t)} dt \leq 4.
\end{equation*}
\textbf{Open problem.} Is it possible, in the spirit of the last remark of Theorem~\ref{thm:Caffarelli}, to prove for $f \equiv 1$ a sharper estimate of the asymptotic behavior of the solution
$u_{1/2}(x)$, even though $f$ does not vanish at any boundary point? For example $u_{1/2}(x) \sim dist (x, \partial \Omega)| \ln \ dist(x, \partial \Omega)|^k$ for some real $k <1$. According to the conducted numerical experiments in Section~\ref{sec:numtests}, it seems $k=0.86$ to be enough (see Fig.~\ref{fig:Boundary Layers}), which gives rise to a slight improvement of the above general theoretical result, documented in Theorem~\ref{thm:Caffarelli}, case (c).

\section{Inhomogeneous Spectral Fractional Laplacian}   \label{sec3}
We consider both formulations, leading to the following non-local problems:
\\ \textbf{A)} The "Riesz formulation":
\begin{equation}    \label{eq:Riesz Nonhomogeneous}
(- \Delta)^s u = f \quad in \quad \Omega \subset \mathbb{R}^n,
\qquad u(x) = g(x), \quad x \in \mathbb{R}^n \backslash \Omega.
\end{equation}
\textbf{B)} The "spectral formulation":
\begin{equation}    \label{eq:Spectral Nonhomogeneous}
(- \Delta)^s u = f \quad in \quad \Omega \subset \mathbb{R}^n,
\qquad u|_{\partial \Omega} = g.
\end{equation}
Note that, unlike the homogeneous case, which has been well studied, the case $g\not\equiv0$ is less clear (see \cite{antil, anna, nicole}). Furthermore, different statements for case B), including possible singularities, are also available in the literarute (e.g., see \cite{abatangelo}).

Now, we solve both cases for $f \equiv \delta_0$, where
$\delta_0$ is the Dirac function, concentrated at the origin
$O(0,...,0)$, i.e., $<\delta_0, \varphi> = \varphi(O)$.
The utilized approach gives rise to explicit formulation of the corresponding solutions in terms of infinite power series. Therefore, in this section we will not fix the functional spaces we work at, but the interested reader can derive them from the series asymptotic. 

A fundamental solution of the equation
\begin{equation}    \label{eq3.6}
(- \Delta)^s u = \delta_0 \quad in \quad \mathbb{R}^n
\end{equation}
is (see \cite[Theorem 2.3]{bucur}):
\begin{equation}    \label{eq3.7}
u_0 (x) = a (n,s) ||x||^{-n + 2s}, \quad 2s \neq n
\end{equation}
\begin{equation*}    \label{eq3.8}
u_0 (x) = a (n,s) ln ||x||, \quad 2s=n,
\end{equation*}
where the constant $a(n, s)$ is given by
\begin{equation*}
a(n,s) = \frac{\Gamma \left(\frac{n}{2} - s \right)}{2^{2s}
\pi^{\frac{n}{2}} \Gamma(s)} , \quad 2s \neq n
\end{equation*}
(see \cite[(1.13) and (1.20)]{bucur}).

In the "Riesz case" for $2s \neq n$ the function \eqref{eq3.7} is a
solution in any bounded domain $\Omega_n \subset \mathbb{R}^n$ of the
problem:
\begin{align}   \label{eq3.9}
\begin{split}
&(- \Delta)^s u = \delta_0 \quad in \quad \Omega_n, \\
&u = a(n,s) ||x||^{-n+2s}, \quad x \in \mathbb{R}^n \backslash
\Omega_n.
\end{split}
\end{align}
\begin{remark}\label{remark:L2 loc} 
It is easy to see that
\begin{equation*}
u_0 \in L^2_{loc} (\mathbb{R}^n) \Leftrightarrow
\end{equation*}
\begin{equation*}
\textbf{a)} \ n=1, \ s > \frac{1}{4}; \quad \textbf{b)} \ n=2, \ s >
\frac{1}{2}; \quad \textbf{c)} \ n=3, \ s > \frac{3}{4}.
\end{equation*}
Indeed, $\delta_0 \in H^t (\mathbb{R}^n)$ for each $t<-n/2$, and thus for the solution of the equation  \eqref{eq3.9}  it follows: $u_0 \in L^2_{loc} (\mathbb{R}^n)$, if $t+2s\ge0  \Leftrightarrow -n/2+2s>0  \Leftrightarrow s>n/4$. In this case we can not use the usual duality between $H^s$ and $H^{-s}$ (see for example \cite{Caff}).
\end{remark}

We compare the Riesz solution \eqref{eq3.7} with the solution of the "spectral fractional" problem:
\begin{equation}    \label{3.10}
(- \Delta)^s u = \delta_0 \quad \text{in} \quad \Omega_n,
\end{equation}
where $\Omega_n \subset \mathbb{R}^n$ is some appropriate bounded domain,
with boundary conditions
\begin{equation}    \label{3.11}
u|_{\partial \Omega_n} = u_0 (x)|_{\partial \Omega_n}.
\end{equation}
The cases $\mathbf a)$ and $\mathbf b)$ in the "spectral formulation" are investigated.

For the inhomogeneous Dirichlet problem \eqref{3.10}, \eqref{3.11}, according to \cite{antil}, \cite{anna}:
\begin{equation}\label{eq:antil}
\left(-\Delta_{\Omega_n}\right)^s u:=\sum_{k=1}^{\infty}\left(\lambda_k^s(u,e_k)_{L_2(\Omega_n)}-\lambda_k^{s-1}\left(u,
\frac{\partial e_k}{\partial n}\right)_{L_2(\partial\Omega_n)}\right)e_k. 
\end{equation}
Instead of using this formula for the inhomogeneous operator, in \cite{antil} it is suggested to apply the so called ``harmonic lifting'' approach, which means: we separate our problem \eqref{3.10}-\eqref{3.11} into two different problems -- a homogeneous fractional Dirichlet problem for the operator, given by \eqref{eq6} and an inhomogeneous fractional Dirichlet problem, for which we are looking for the solution of the standard non-fractional Laplace operator with zero right-hand-side and appropriate boundary data.
In other words, first we will use formula
\eqref{eq6} for the operator $(-\Delta)^s$ to find a solution $w_s (x)$ of equation
\eqref{3.10}. Then we solve:
\begin{equation}   \label{3.12}
- \Delta v = 0 \quad in \quad \Omega_n, \qquad
 v(x) = u_0 (x), \quad x \in \partial \Omega_n
\end{equation}
in a "very weak form" in terms of \cite{antil,anna,thomas}. 

\textbf{Case I: $n=1$, $s\neq 1/2$. } We will choose now $\Omega_1 = \{ |x| <1 \}$. For a solution of
\begin{equation}    \label{3.13}
\left( -\frac{d^2}{dx^2} \right)^s w = \delta_0 \quad in \quad
\Omega_1,\qquad
w|_{x= \pm 1} = 0,
\end{equation}
plugging formula \eqref{eq9} in definition \eqref{eq6}, we get
\begin{equation*}
\begin{aligned}
(- \Delta_{\Omega_1,0})^s w &:=\sum_{k=1}^{\infty} \lambda_k^s (w, e_k)_{L_2
(\Omega_1)} e_k (x) = \delta \equiv \sum_{k=1}^{\infty} < \delta,
e_k>
e_k (x) \\
&= \sum_{m=0}^{\infty} (-1)^m \sin \left[\frac{(2m+1) \pi}{2} (x+1)\right].
\end{aligned}
\end{equation*}
Then $(w, e_{2m}) = 0$ and
\begin{equation*}
(w, e_{2m+1})_{L_2 (\Omega_1)} = (-1)^m \left( \frac{(2m+1) \pi}{2}
\right)^{-2s} .
\end{equation*}
Thus the solution of \eqref{3.13} is:
\begin{equation}    \label{3.15}
w_{1,s} (x) = \left(\frac{2}{\pi}\right)^{2s}\sum_{m=0}^{\infty} \frac{(-1)^m}{(2m+1)^{2s}} \sin \left[\frac{(2m+1) \pi}{2} (x+1)\right].
\end{equation}
Since $(-1)^m=\sin[(2m+1) \pi/2]$, we rewrite \eqref{3.15} as
\begin{equation}\label{3.15.2}
w_{1,s} (x) = \left(\frac{2}{\pi}\right)^{2s}\sum_{m=0}^{\infty} \frac{1}{(2m+1)^{2s}} \cos \left[\frac{(2m+1) \pi x}{2}\right]. 
\end{equation}
\begin{theorem}\label{thm:1D Dirac}
The solution $w_{1,s}(x)$ from \eqref{3.15} possesses the following properties:
\begin{itemize}
\item[(a)] \hspace{2pt} $w_{1,s} \in L_2 (\Omega_1)\quad\Longleftrightarrow \quad s\in(1/4,1)$.
\item[(b)] \hspace{2pt} $\displaystyle w_{1,s}(0)=\left(\frac{2}{\pi}\right)^{2s}\sum_{m=0}^{\infty} \frac{1}{(2m+1)^{2s}}<+\infty\quad\Longleftrightarrow\quad s>\frac12$. 
\item[(c)] \hspace{2pt} $w_{1,s}\in C\left(\bar\Omega_1\setminus \{0\}\right),\;\forall s\in (0,1)\setminus \{1/2\}$; $w_{1,s}\in C\left(\bar\Omega_1\right),\;\forall s\in(1/2,1)$.
\end{itemize}
\end{theorem}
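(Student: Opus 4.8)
My plan is to handle (a), (b), (c) separately, using throughout the observation that \eqref{3.15.2} is the expansion of $w_{1,s}$ in the orthonormal system $\{\cos[(2m+1)\pi x/2]\}_{m\ge0}\subset L_2(\Omega_1)$, which is nothing but the odd-index part of the Dirichlet eigenbasis $\{e_k\}$ of \eqref{eq9}: a one-line trigonometric identity gives $e_{2m+1}(x)=(-1)^m\cos[(2m+1)\pi x/2]$, so that $w_{1,s}=(2/\pi)^{2s}\sum_{m\ge0}(-1)^m(2m+1)^{-2s}e_{2m+1}$. With this identification, (a) becomes a Parseval/Riesz--Fischer statement about the coefficient sequence, (b) becomes a direct evaluation at $x=0$, and (c) becomes a question of uniform convergence, which I would settle with the Weierstrass $M$-test when $s>1/2$ and with Dirichlet's test when $0<s<1/2$.

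For (a): since $\{e_k\}_{k\ge1}$ is a complete orthonormal basis of $L_2(\Omega_1)$, the Riesz--Fischer theorem says that \eqref{3.15.2} converges in $L_2(\Omega_1)$, with sum $w_{1,s}$ satisfying $\|w_{1,s}\|_{L_2(\Omega_1)}^2=(2/\pi)^{4s}\sum_{m\ge0}(2m+1)^{-4s}$, exactly when the coefficient sequence is square-summable, i.e. when $4s>1$; together with the standing range $s\in(0,1)$ this gives $w_{1,s}\in L_2(\Omega_1)\iff s\in(1/4,1)$. For (b): putting $x=0$ in \eqref{3.15.2} turns every cosine into $1$, so $w_{1,s}(0)=(2/\pi)^{2s}\sum_{m\ge0}(2m+1)^{-2s}$, a series of positive terms that is finite precisely when $2s>1$; for $s>1/2$ this number is indeed the value at $0$ of a function continuous at $0$ (by part (c)), while for $s\le1/2$ the series diverges to $+\infty$.

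For (c): when $s>1/2$ one has $\sum_{m\ge0}(2m+1)^{-2s}<\infty$, so the $M$-test makes \eqref{3.15.2} converge uniformly on all of $\mathbb{R}$, whence $w_{1,s}\in C(\mathbb{R})\subset C(\bar\Omega_1)$. When $0<s<1/2$ (and in fact for every $s\in(0,1)$) I would fix $\delta\in(0,1)$ and work on $K_\delta:=\{x\in[-1,1]:|x|\ge\delta\}$; by evenness of $w_{1,s}$ it suffices to treat $[\delta,1]$. From the geometric-sum identity $\sum_{m=0}^{M}e^{i(2m+1)\theta}=e^{i\theta}\,\frac{e^{2i(M+1)\theta}-1}{e^{2i\theta}-1}$ one reads off the uniform bound $\bigl|\sum_{m=0}^{M}\cos[(2m+1)\pi x/2]\bigr|\le \frac{1}{\sin(\pi\delta/2)}$ for $x\in[\delta,1]$; since $(2m+1)^{-2s}$ decreases monotonically to $0$, the uniform version of Dirichlet's test yields uniform convergence of \eqref{3.15.2} on $K_\delta$, hence continuity of $w_{1,s}$ there. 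As every point of $\bar\Omega_1\setminus\{0\}$ lies, with a neighborhood, inside some $K_\delta$, this gives $w_{1,s}\in C(\bar\Omega_1\setminus\{0\})$.

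The step I expect to be the main obstacle is the $s\in(0,1/2)$ part of (c): there the series is only conditionally convergent, so there is no uniform majorant and the argument rests on the closed form for the partial sums of $\sum\cos[(2m+1)\pi x/2]$ together with their uniform bound away from the singular point $x=0$. A secondary care point is that for $s\le1/4$ the series defining $w_{1,s}$ does not converge in $L_2$, so in (a) one must read ``$w_{1,s}\in L_2$'' as ``the defining series converges in $L_2$'' — the conditionally convergent pointwise sum still exists off $x=0$, but its $L_2$-membership is exactly the Riesz--Fischer dichotomy invoked above.
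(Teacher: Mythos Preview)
Your proposal is correct and follows essentially the same route as the paper: Parseval/Riesz--Fischer for (a), direct evaluation at $x=0$ for (b), and for (c) the Weierstrass $M$-test when $s>1/2$ together with Dirichlet's test (via the closed form for the partial sums $\sum_{m=p}^{P-1}\cos[(2m+1)\pi x/2]=(\sin P\pi x-\sin p\pi x)/(2\sin(\pi x/2))$, bounded away from $x=0$) when $s\le 1/2$. Your write-up is in fact more detailed than the paper's own proof, which compresses (a)--(c) into a few lines.
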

\begin{proof}
The function $w_{1,s} \in L_2 (-1, 1)$ iff the series
$\sum_{m=0}^{\infty} (2m+1)^{-4s}$ converges, which is true iff $s >
1/4$. Furthermore, $w_{1,s} \in C[-1,1]$ iff $s>1/2$. Then, we can use the Dirichlet criteria for convergence, because (as in \eqref{eq15.2} above)
\begin{equation}\label{eq:sum cosines}
\sum_{m=p}^{P-1} \cos \left[\frac{(2m+1) \pi x}{2}\right]=\frac{\sin (P\pi x)-\sin(p\pi x)}{2\sin(\pi x/2)},\quad \forall P>p\ge0.
\end{equation} 
Thus, the series \eqref{3.15.2} uniformly converges away from $x=0$. The proof is completed.
\end{proof}

The solution
of the inhomogeneous problem \eqref{3.12} is obviously $v_{1,s}=a(1,s)$,
because in \eqref{eq3.7} for both $x = \pm 1$ clearly
$\|x\| = 1$, and $u_0|_{x=\pm 1}=a (1,s)=const$.

Then the spectral fractional solution of \eqref{3.10}, \eqref{3.11} is:
\begin{equation}    \label{3.16}
u_s(x) = w_{1,s} (x) + a (1,s),
\end{equation}
and we compare both solutions from \eqref{eq3.7} and
\eqref{3.16}. Note that both solutions belong to $L_2
[-1,1]$ iff $ s> 1/4$. However, in order for $u_s\in C[-1,1]$ we need $s>1/2$.

\textbf{Case II: $n=2$, $s\in(0,1)$.} Now, we choose the domain to be the
square $\Omega_2 = \{ |x| < 1, \ |y| < 1 \}$.
The spectral inhomogeneous fractional problem \eqref{3.10}, \eqref{3.11} is:
\begin{equation}    \label{3.17}
(- \Delta_{\Omega_2})^s u \equiv \left( - \frac{\partial^2}{\partial x^2} -
\frac{\partial^2}{\partial y^2} \right)^{s}u = \delta_0 \quad in
\quad \Omega_2,
\end{equation}
\begin{align}   \label{3.18}
\begin{split}
&u|_{y= \pm 1} = a (2,s) (x^2 + 1)^{-1 + s}, \quad -1< x < 1; \\
&u|_{x= \pm 1} = a (2,s) (y^2 + 1)^{-1 + s}, \quad -1 < y < 1.
\end{split}
\end{align}
The corresponding eigenvalues and eigenfunctions for the
problem \eqref{eq7} in the rectangle $\Omega_2$ are:
\begin{equation}    \label{3.19}
\lambda_{k,m} = (k^2 + m^2) \frac{\pi^2}{4}, \quad k,m=1,2...
\end{equation}
\begin{equation}    \label{3.20}
e_{k,m} (x,y) = \sin \left[ \frac{k \pi}{2} (x+1) \right] \sin
\left[ \frac{m \pi}{2} (y+1) \right].
\end{equation}
According to \eqref{eq6}, a solution of equation \eqref{3.17}
with homogeneous Dirichlet conditions is:
\begin{equation}    \label{3.21}
w_{2,s} (x,y) = \sum_{k,m=0}^{\infty}
\frac{(-1)^{k+m}}{\lambda^s_{2k+1,2m+1}} e_{2k+1, 2m+1} (x,y)
\end{equation}
Following the 1D case, we can rewrite \eqref{3.21} as 
\begin{equation}    \label{3.21.2}
w_{2,s}(x,y)=\sum_{k,m=0}^{\infty}
\frac{(2/\pi)^{2s}}{[(2k+1)^2 + (2m+1)^2]^s} \cos \left[ \frac{(2k+1) \pi x}{2} \right] \cos
\left[ \frac{(2m+1) \pi y}{2} \right].
\end{equation}
\begin{theorem}\label{thm:2D Dirac}
The solution $w_{2,s}(x,y)$ from \eqref{3.21.2} possesses the following properties:
\begin{itemize}
\item[(a)] \hspace{2pt} $w_{2,s} \in L_2 (\Omega_2)\quad\Longleftrightarrow \quad s\in(1/2,1)$.
\item[(b)] \hspace{2pt} $\displaystyle w_{2,s}(0,0)=\left( \frac{2}{\pi} \right)^s \sum_{k,m=0}^{\infty}
\frac{1}{[(2k+1)^2 + (2m+1)^2]^s}=+\infty,\quad\forall s\in (0,1)$, \\ which means that our ``spectral solution'' with homogeneous Dirichlet data, as the fundamental solution \eqref{eq3.7} in the Riesz formulation, is always unbounded.
\item[(c)] \hspace{2pt} $w_{2,s} \in C\left(\bar\Omega_2\setminus(0,0)\right)$, for $s\in(1/2,1)$.
\end{itemize}
\end{theorem}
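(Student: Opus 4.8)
The plan is to dispatch the three items in turn, each time reducing the question to the convergence of an explicit numerical series and exploiting the partial‑sum identity \eqref{eq:sum cosines} together with the fact that $\sum_{k\ge0}(2k+1)^{-2s}<\infty$ precisely when $s>1/2$. For item (a) I would first observe that $\{\cos[\tfrac{(2k+1)\pi x}{2}]\}_{k\ge0}$ is orthonormal in $L_2(-1,1)$ — a one‑line product‑to‑sum computation gives $\int_{-1}^1\cos[\tfrac{(2k+1)\pi x}{2}]\cos[\tfrac{(2j+1)\pi x}{2}]\,dx=\delta_{kj}$ — so the products $\cos[\tfrac{(2k+1)\pi x}{2}]\cos[\tfrac{(2m+1)\pi y}{2}]$ are orthonormal in $L_2(\Omega_2)$. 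By Riesz--Fischer, the series \eqref{3.21.2} converges in $L_2(\Omega_2)$ (to $w_{2,s}$) iff $\sum_{k,m\ge0}[(2k+1)^2+(2m+1)^2]^{-2s}<\infty$, in which case $\|w_{2,s}\|_{L_2(\Omega_2)}^2$ equals $(2/\pi)^{4s}$ times this sum. Since $\tfrac12(a+b)^2\le a^2+b^2\le(a+b)^2$ for $a,b\ge0$, the summand is comparable to $(k+m+1)^{-4s}$; grouping the $(k,m)$ with fixed $k+m+1=n$ (there are $n$ of them) turns the double sum into one comparable to $\sum_{n\ge1}n^{1-4s}$, which converges exactly for $s>1/2$.

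For item (b), at the origin every cosine equals $1$, so $w_{2,s}(0,0)$ is a positive constant times $\sum_{k,m\ge0}[(2k+1)^2+(2m+1)^2]^{-s}$; the same comparison as above shows this series behaves like $\sum_{n\ge1}n^{1-2s}$, which diverges for every $s\le1$. Hence $w_{2,s}(0,0)=+\infty$ for all $s\in(0,1)$, and in particular $w_{2,s}$ is unbounded near $O$, mirroring the behaviour of the Riesz fundamental solution \eqref{eq3.7}.

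For item (c), fix $\delta\in(0,1)$; by the evenness of $w_{2,s}$ in each variable and its symmetry under $x\leftrightarrow y$ it suffices to prove $w_{2,s}\in C(K_\delta)$ for $K_\delta:=\{(x,y)\in\bar\Omega_2:\ |y|\ge\delta\}$, the analogous statement with $|x|\ge\delta$ then following by symmetry, and $\bar\Omega_2\setminus\{(0,0)\}$ being the union of these regions over $\delta>0$. On $K_\delta$ I would sum the double series over $m$ first: for fixed $k$ the coefficients $[(2k+1)^2+(2m+1)^2]^{-s}$ decrease to $0$ in $m$, while by \eqref{eq:sum cosines} the partial sums of $\cos[\tfrac{(2m+1)\pi y}{2}]$ are bounded in modulus by $1/\sin(\pi\delta/2)$ uniformly on $\{|y|\ge\delta\}$. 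Dirichlet's test then produces a function $h_k(y)=\sum_{m\ge0}[(2k+1)^2+(2m+1)^2]^{-s}\cos[\tfrac{(2m+1)\pi y}{2}]$ that is a uniform limit of continuous functions on $\{|y|\ge\delta\}$, hence continuous there, and satisfies $|h_k(y)|\le C_\delta[(2k+1)^2+1]^{-s}\le C_\delta(2k+1)^{-2s}$. Writing $w_{2,s}(x,y)=(2/\pi)^{2s}\sum_{k\ge0}h_k(y)\cos[\tfrac{(2k+1)\pi x}{2}]$ and using $\sum_k(2k+1)^{-2s}<\infty$ (this is where $s>1/2$ enters), the Weierstrass $M$-test gives uniform convergence on $K_\delta$ of a series of functions continuous in $(x,y)$; therefore $w_{2,s}\in C(K_\delta)$.

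I expect item (c) to be the crux. For $s\in(1/2,1)$ the double series \eqref{3.21.2} is only conditionally convergent — by the estimate used for (b), the sum of the absolute values of the coefficients is $+\infty$ — so there is no single absolutely convergent majorant, and one cannot simply invoke the $M$-test on the full series. The way around this is the observation that every point other than $O$ keeps at least one of $|x|,|y|$ bounded away from $0$, so that \eqref{eq:sum cosines} tames the trigonometric partial sums in that variable; one then needs only a Dirichlet test in that variable and genuine absolute convergence (guaranteed by $2s>1$) in the other. A secondary point worth a sentence is that the argument rearranges the series into an iterated sum: this is legitimate because the rearranged series is shown to converge with the stated uniformity on each $K_\delta$, and on the overlap of the two symmetric regions both summation orders are readily seen to yield the same continuous function.
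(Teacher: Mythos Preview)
Your proof is correct. Items (a) and (b) match the paper's treatment (the paper merely asserts the $L_2$ equivalence and calls (b) ``straightforward'', while you spell out the comparisons with $\sum n^{1-4s}$ and $\sum n^{1-2s}$). For (c) you use the same two ingredients as the paper --- the partial-sum bound \eqref{eq:sum cosines} (equivalently Lemma~\ref{lemma1}, which is Abel/Dirichlet) and the convergence of $\sum_k(2k+1)^{-2s}$ for $s>1/2$ --- but you organise them differently: you form the iterated sum $\sum_k\bigl(\sum_m\cdots\bigr)$, apply Dirichlet's test in $m$ to obtain $|h_k(y)|\le C_\delta(2k+1)^{-2s}$, and then use the Weierstrass $M$-test in $k$. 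The paper instead works directly with ``annular'' partial sums over $\{2K^2\le k^2+m^2<2K_1^2\}$ and splits into three ranges of $k$ to show these are uniformly Cauchy. Your route is shorter and avoids the three-case split; the paper's route has the minor advantage that its partial sums are the natural ones for the spectral expansion (ordered by eigenvalue), so no remark on rearrangement is needed. Your closing sentence addresses precisely this point, and the justification is sound: the iterated partial sums converge both uniformly on $K_\delta$ and in $L_2(\Omega_2)$ (by square-summability of the coefficients), so their limits coincide a.e.\ and the continuous representative is $w_{2,s}$.
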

\begin{proof}
It is easy to see that
\begin{equation*}
w_{2,s} \in L_2 (\Omega_2)\quad
\Longleftrightarrow \quad\sum_{k,m=1}^{\infty} \frac{1}{(k^2 + m^2)^{2s}} < +\infty \quad
\Longleftrightarrow \quad s\in(1/2,1),
\end{equation*}
which concludes (a). Statement (b) is straightforward.

In order to prove (c), we begin with the following well-known result:
\begin{lemma}\label{lemma1}
Let $\{\alpha_m\}_{m=0}^\infty$ be a non-increasing, non-negative sequence and $\{\beta_m(y)\}_{m=0}^\infty$ be a sequence of continuous functions, which partial absolute sums are uniformly bounded, i.e., there exists a constant $L$, such that if $B_M(y):=\sum_{m=0}^{M}\beta_m(y)$, then $|B_M(y)|\le L$, for all $M\in\mathbb N$ and $y$. Then, the following estimate holds true:
$$
\left|\sum_{m=0}^M\alpha_m\beta_m(y)\right|\le\alpha_0 L,\qquad \forall M\in\mathbb N.
$$
\end{lemma}

Now, we apply Lemma~\ref{lemma1} to the solution \eqref{3.21.2}. Let $(x_0,y_0)\in\bar\Omega_2\setminus(0,0)$. Due to symmetry, without loss of generality let $y_0\neq0$. Take a local neighborhood $\mathcal N_{(x_0,y_0)}$, which is in $\Omega_2\setminus\{y=0\}$, respectively $\bar\Omega_2\setminus\{y=0\}$, if $(x_0,y_0)\in\Omega_2$, respectively $(x_0,y_0)\in\partial\Omega_2$. We will prove uniform convergence of the series \eqref{3.21.2} in $\mathcal N_{(x_0,y_0)}$ with respect to the following definition: for every $\varepsilon>0$, there exists an integer $K=K(\varepsilon)$ such that for every $K_1>K$ the ``partial series''
$$
\left|\sum_{2K^2\le k^2+m^2 < 2K^2_1}
\frac{1}{[(2k+1)^2 + (2m+1)^2]^s} \cos \left[ \frac{(2k+1) \pi x}{2} \right] \cos
\left[ \frac{(2m+1) \pi y}{2} \right]\right|<\varepsilon,
$$
for all $(x,y)\in\mathcal N_{(x_0,y_0)}$.  Indeed, since $y\neq0$, equation \eqref{eq:sum cosines} gives rise to
$$
\left|\sum_{m=p}^{P-1} \cos \left[\frac{(2m+1) \pi y}{2}\right]\right|\le\frac{1}{\sin(\pi y/2)}\quad \forall P>p\ge0,
$$
and we can choose a constant $L<+\infty$, such that $1/\sin(\pi y/2)\le L$, for all $(x,y)\in\mathcal N_{(x_0,y_0)}$. For a fixed $k$ we set
$$
\alpha_m:=\frac{1}{[(2k+1)^2 + (2m+1)^2]^s}, \qquad \beta_m(y):=\cos \left[\frac{(2m+1) \pi y}{2}\right].
$$
Obviously the assumptions in Lemma~\ref{lemma1} are satisfied for
our choice. We consider three separate cases. Firstly, let
$\sqrt{2}K\le k\le \sqrt{2}K_1$. Then $m\ge0$ and by
Lemma~\ref{lemma1}
\begin{equation} \label{33estimate}
\begin{split}
&\left|\sum_{0\le m <\sqrt{2K_1^2 - k^2}}\frac{1}{[(2k+1)^2 + (2m+1)^2]^s} \cos\left[\frac{(2m+1)\pi y}{2}\right]\right| \\
&=\left|\sum_{0\le m <\sqrt{2K_1^2 - k^2}}\alpha_m\beta_m(y)\right|\le\alpha_0L< \frac{L}{(2k+1)^{2s}}.
\end{split}
\end{equation}
Since $\sum_k (2k+1)^{-2s}<+\infty$ for $s>1/2$, we can choose $K$ such that
$$\sum_{k=K}^{\infty}\frac{L}{(2k+1)^{2s}}<\frac{\varepsilon}{3}.$$
Using the estimate \eqref{33estimate} it follows
$$
\sum_{\sqrt{2}K\le k\le \sqrt{2}K_1} \left| \cos \left[ \frac{(2k+1)
\pi x}{2} \right] \right| \left|\sum_{m}\alpha_m\beta_m(y)\right|
\le \sum_{k=K}^{2K_1}\frac{L}{(2k+1)^{2s}}
$$

This guarantees that the corresponding part of the ``partial series'' within the region $\sqrt{2}K\le k\le \sqrt{2}K_1$ is less than $\varepsilon/3$.

Secondly, let $K\le k\le \sqrt{2}K$. Then in this case for the fixed $k$  we have $2K^2\le k^2+m^2 < 2K^2_1$, i.e. now $m\ge m_k:=[\sqrt{2K^2-k^2}]+1$, or  $m\ge m_k:=\sqrt{2K^2-k^2},$ if the last number is an integer. In both cases
\begin{equation*}
\begin{split}
&\left|\sum_{m_k \le m <\sqrt{2K_1^2-k^2}}\frac{1}{[(2k+1)^2 + (2m+1)^2]^s} \cos\left[\frac{(2m+1)\pi y}{2}\right]\right| \\
&=\left|\sum_{m_k \le m <\sqrt{2K_1^2-k^2}}\alpha_m\beta_m(y)\right|
\le \alpha_{m_k} L< \frac{L}{(2k+1)^{2s}}.
\end{split}
\end{equation*}
As in the first case it follows $ |\sum|<\varepsilon /3$   with the
same choise of $K$. This guarantees that the corresponding part of
the ``partial series'' within the region $K\le k\le \sqrt{2}K$  is
less than $\varepsilon/3$.

Third, let $0\le k\le K$. Then, since $k^2+m^2\ge2K^2$ within the
region of interest, $m\ge K$, and analogously to the second case we
conclude
$$
\left|\sum_{\sqrt{2K^2-k^2}\le m <\sqrt{2K_1^2-k^2}}\frac{\cos[(2m+1)\pi y/2]}{[(2k+1)^2 + (2m+1)^2]^s}\right|\le\alpha_K L<\frac{L}{(2K+1)^{2s}}.
$$
Hence the ``partial series'' within the third region $0\le k\le K$ is bounded by
$$
\sum_{k=0}^K\frac{L}{(2K+1)^{2s}}=\frac{L(K+1)}{(2K+1)^{2s}}<L(2K+1)^{1-2s}<\frac{\varepsilon}{3}
$$
for large enough $K$, as  $s>1/2$. Taking the value of K bigger than the values in the first and the third case the proof is completed.
\end{proof}

To solve the corresponding inhomogeneous problem \eqref{3.17},
\eqref{3.18} using the "harmonic lifting" technique we have to solve
\begin{equation}    \label{3.22}
- \Delta v_{2,s} = 0 \quad in \quad \Omega_2,
\end{equation}
\begin{align}   \label{3.23}
\begin{split}
&v_{2,s}|_{y= \pm 1} = (x^2 + 1)^{-1 + s} \quad -1<x<1 \\
&v_{2,s}|_{x= \pm 1} = (y^2 + 1)^{-1 + s} \quad -1<y<1
\end{split}
\end{align}
It follows from the symmetry that it is enough to solve \eqref{3.22}
with boundary conditions:
\begin{align*}
\begin{split}
&\widetilde v_{2,s}|_{y = \pm 1} = (x^2 + 1)^{-1 + s} - 2^{-1+s} =: \varphi_1 (x), \\
&\widetilde v_{2,s}|_{x = \pm 1} = 0,
\end{split}
\end{align*}
which classical solution is
\begin{equation*}
\widetilde v_{2,s} (x,y) = \sum_{k=1}^{\infty} A_{k} \left(\cosh \left[ \frac{k
\pi}{2}\right]\right)^{-1} \sin \left[ \frac{k \pi}{2} (x+1) \right]
\cosh \left[ \frac{k \pi}{2}y \right],
\end{equation*}
where $A_k = \int_{-1}^{1} \varphi_1 (x) \sin \left[ \frac{k \pi}{2}
(x+1) \right] dx$. Note, that $\varphi_1$ is an even function, while $\sin \left[ \frac{k \pi}{2}
(x+1) \right]$ is even for odd $k$ and odd for even $k$. Thus, $A_{2k}=0$, while $A_{2k+1}=2\int_{0}^{1} \varphi_1 (x) \sin \left[ \frac{(2k+1) \pi}{2}(x+1) \right] dx$. Then a spectral fractional solution of
\eqref{3.17}, \eqref{3.18} is:
\begin{equation}\label{eq:Spectral2DDelta}
u_s(x,y) = w_{2,s} (x,y) + a(2,s) \left[ \widetilde v_{2,s}(x,y) + \widetilde v_{2,s}(y,x) +
2^{-1+s}\right].
\end{equation}

\section{Numerical tests}\label{sec:numtests}
In this section we numerically confirm the theoretical results from Sections~\ref{sec:2}--\ref{sec3} and address various observations on the behavior of the corresponding solutions. 

For the homogeneous Dirichlet boundary problem with right-hand-side $f\equiv 1$ on $(-1,1)$, on Fig.~\ref{fig:Constant RHS} we plot the solutions with respect to both formulations, when $s=\{0.25,0.5,0.75\}$. In \eqref{eq12} we truncate the sum at $10^4$. We observe that in all the cases, the Riesz solution point-wise exceeds the spectral one everywhere in $(-1,1)$. At the endpoints $\pm 1$, of course, the two solutions preserve the boundary conditions and are zero. Furthermore, the bottom right plot in Fig.~\ref{fig:Constant RHS} illustrates the maximum of both solutions as a function of $s\in(0,1)$. This maximum is always attended at $x=0$, and we see that for the spectral formulation the maximum linearly decays as $s$ increases, while for the Riesz formulation, this maximum is a quadratic function in $s$ for $s\in(0,1/2)$, with a peak at $x=0.25$ and only for $s\in(1/2,1)$ becomes a linear function. In conclusion, we observe that when $s<1/2$ the Riesz formulation substantially differs from the spectral formulation. We also confirm that both solutions converge to those of the classical (local) problems, when $s\to 0$ and $s\to 1$, meaning that the fractional Laplacian formulations are indeed continuous extensions of the standard non-fractional one.
\begin{figure}[t]
\centering
\begin{tabular}{cc}
$s=0.25$& $s=0.5$ \\
\includegraphics[width=0.4\textwidth]{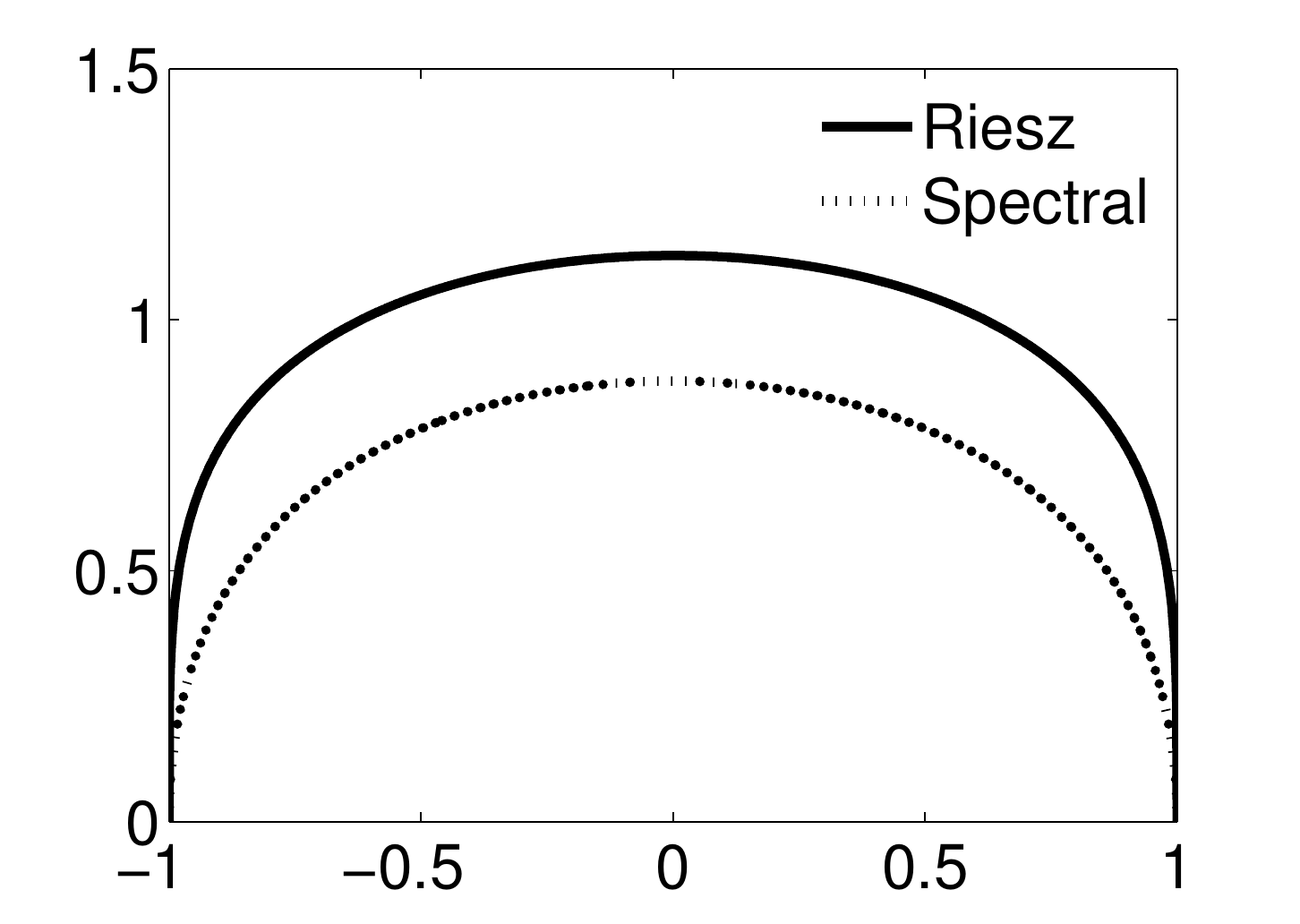} &
\includegraphics[width=0.4\textwidth]{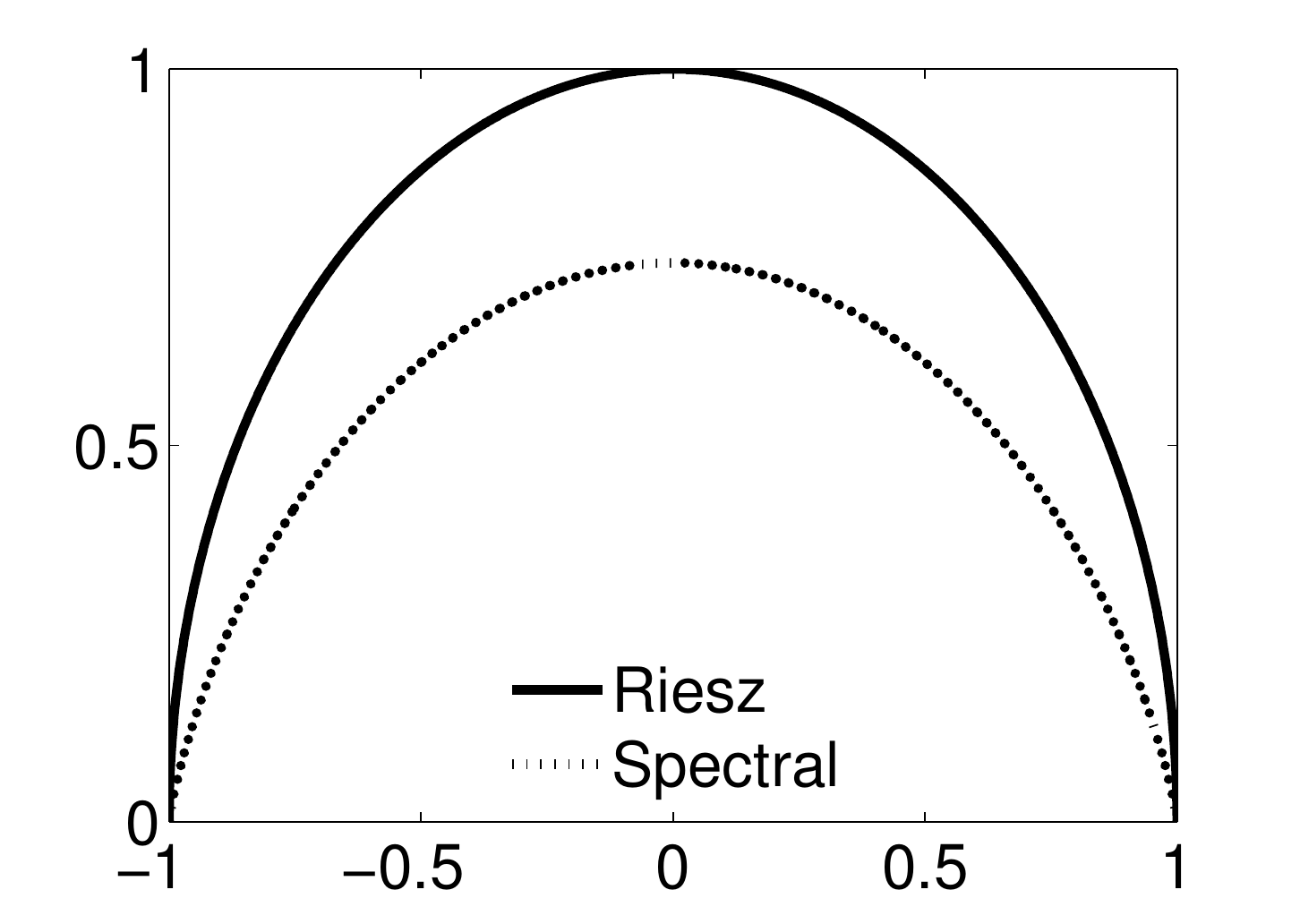} \\
$s=0.75$ & $u_s(0)$, $u_s^R(0)$, $s\in(0,1)$\\ 
\includegraphics[width=0.4\textwidth]{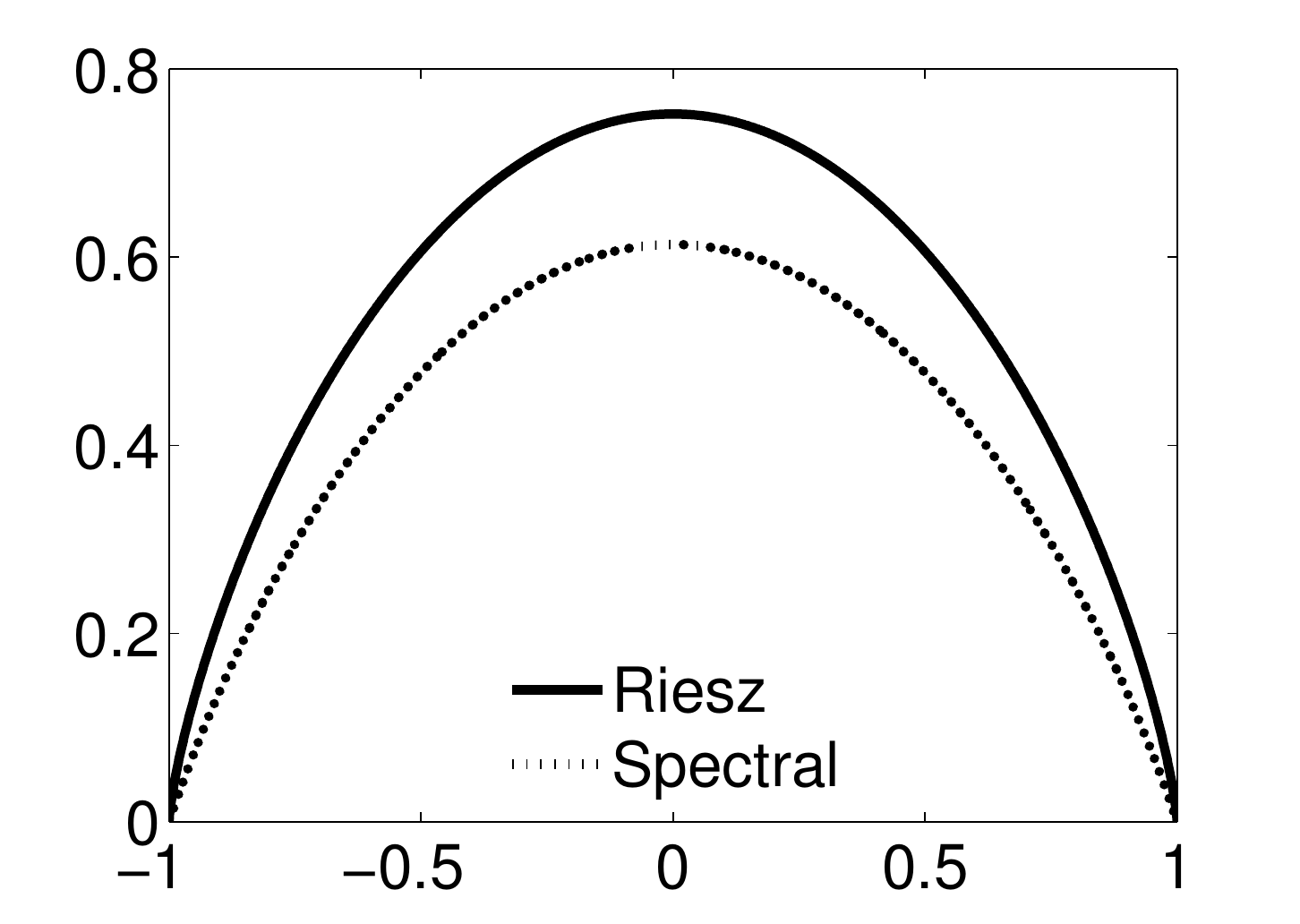} &
\includegraphics[width=0.4\textwidth]{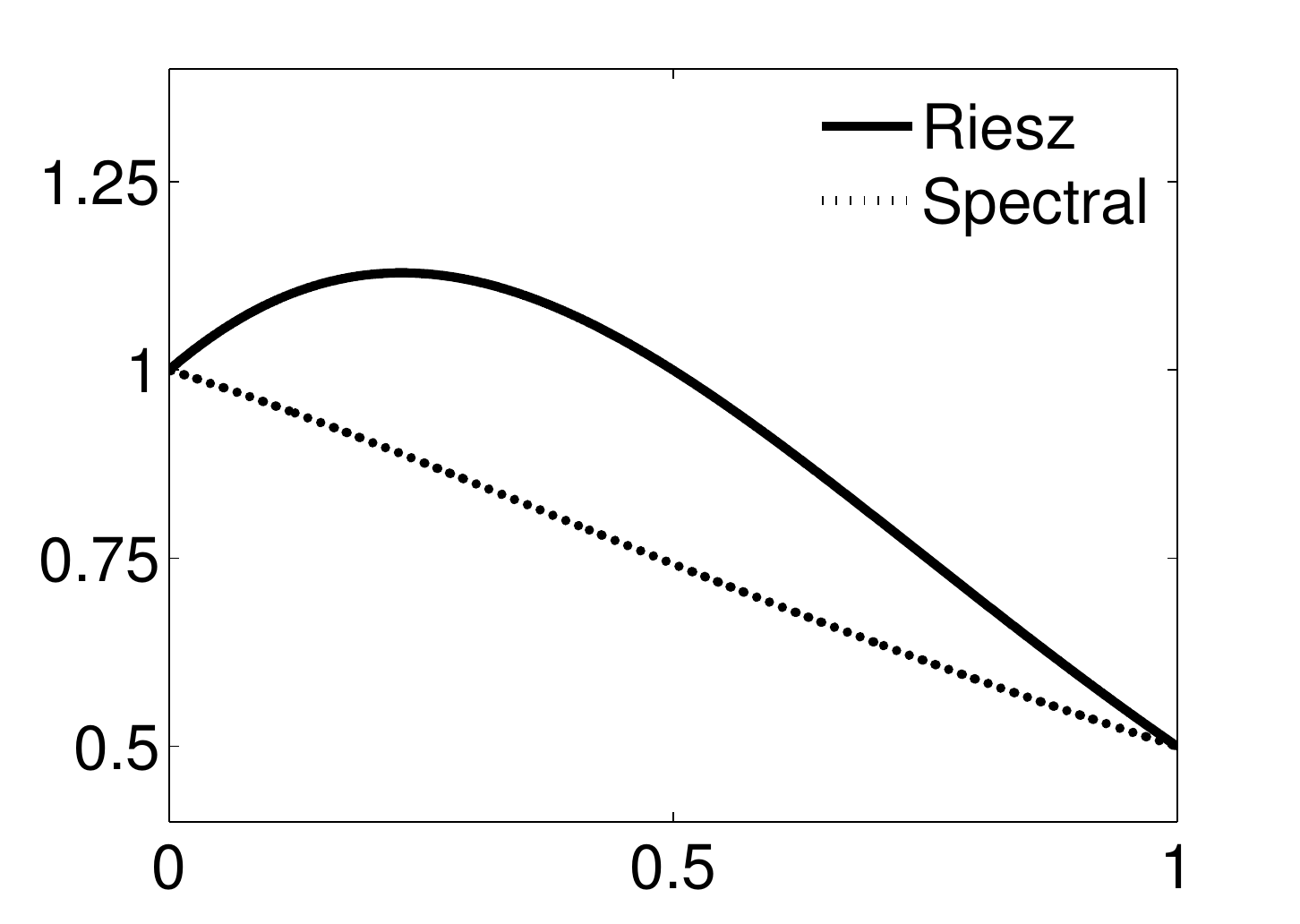} \\
\end{tabular}
\caption{Comparison of the solutions \eqref{eq3} and \eqref{eq12}. Up: The whole solutions for $s=\{0.25,0.5\}$ as a function of $x$. Down: (Left) The whole solution for $s=0.75$ as a function of $x$; (Right) The value at $x=0$ as a function of $s$.}\label{fig:Constant RHS} 
\end{figure}

\begin{figure}[t]
\centering
\begin{tabular}{cc}
Behavior at $x\to-1$& $\frac{\ln\left(|u_{1/2}(-1+j\cdot10^{-6})|/(j\cdot10^{-6})\right)}{\ln|\ln(j\cdot10^{-6})|}$\\
\includegraphics[width=0.4\textwidth]{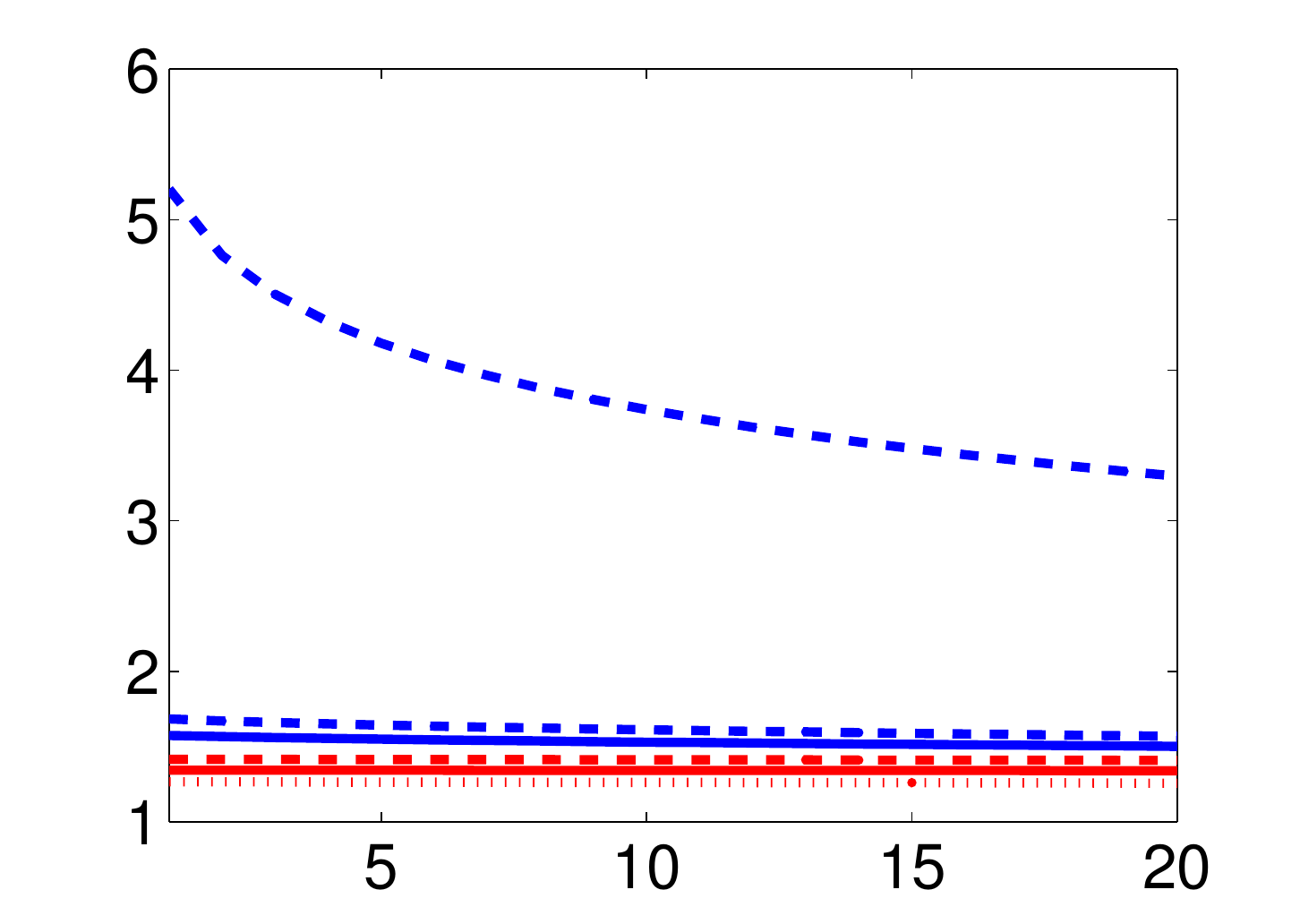} &
\includegraphics[width=0.4\textwidth]{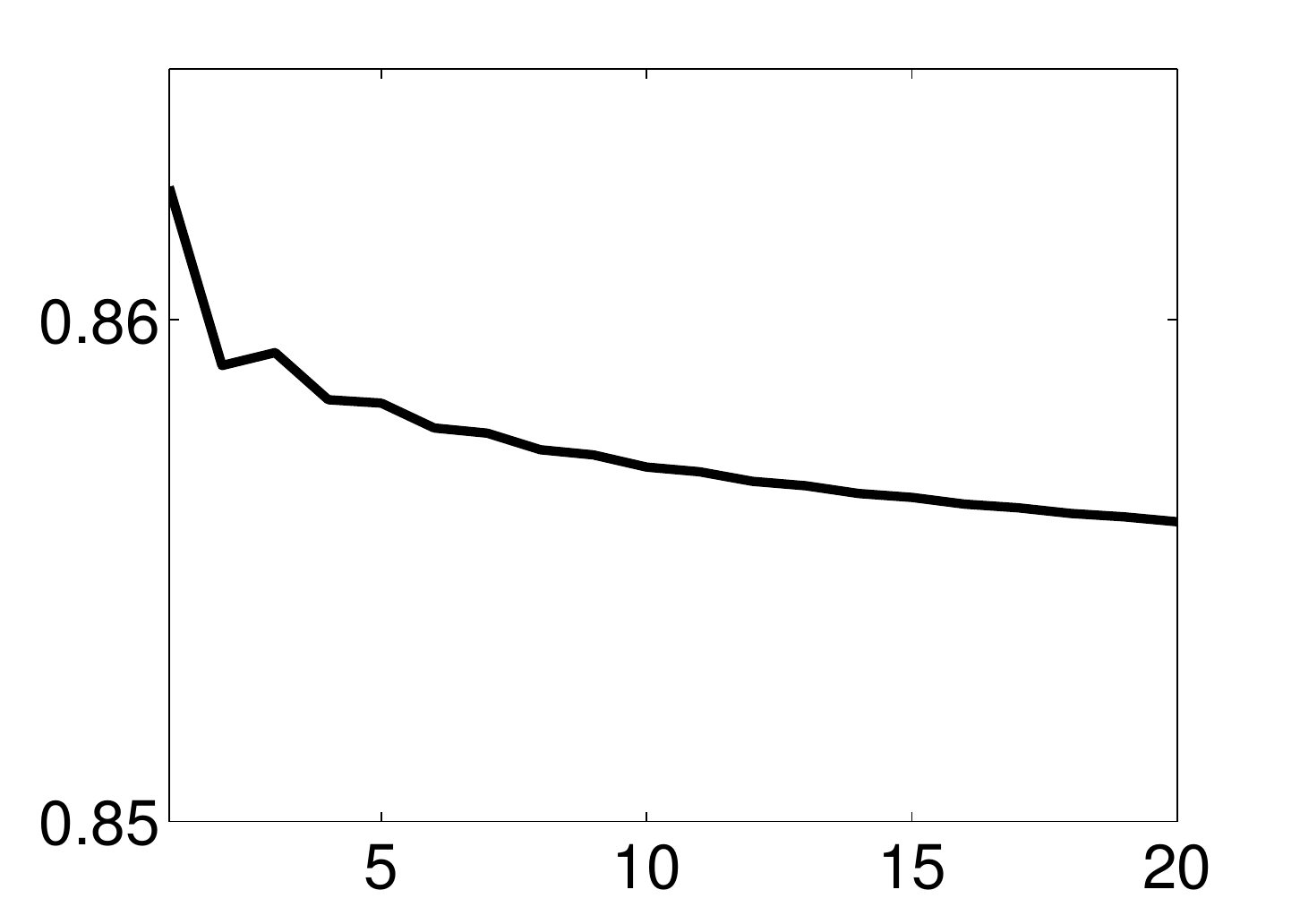}\\ 
\end{tabular}
\caption{(Left) The boundary layer behavior $u_s^R(x)/(1+x)^s$ for \eqref{eq3} vs. the boundary layer behavior $u_s(x)/(1+x)^{\min(2s,1)}$ for \eqref{eq12}, $s=0.25$ (solid lines), $s=0.5$ (dashed lines), and $s=0.75$ (dotted lines); (Right) More detailed boundary layer behavior for $u_{1/2}(x)$ for \eqref{eq12}, $x\to -1$. Both plots are with respect to the $20$ most left grid points $j=1,2,\dots,20$ on a uniform grid with $h=2^{-10}$ (left) and $h=10^{-6}$ (right). Red lines - Riesz formulation; Blue lines - spectral formulation.}\label{fig:Boundary Layers} 
\end{figure}

The left plot in Fig.~\ref{fig:Boundary Layers} deals with the steepness of the interface layers around $x=-1$ of the two solutions and aims at validating the theoretical results in \eqref{eq:Boundary Riesz} and Theorem~\ref{thm:Caffarelli}. A uniform grid on $[-1,1]$ with step size
$h=2^{-10}$ is considered and the ratios
$$
\frac{u_s^R(-1+j\cdot h)}{(j\cdot h)^s},\qquad \frac{u_s(-1+j\cdot h)}{(j\cdot h)^{\min(2s,1)}},\qquad j=\{1,\dots,20\},\;s=\{0.25,0.5,0.75\},
$$
for the Riesz and the spectral formulations, respectively, are plotted. The graphs agree with the theory. In particular, it is clearly visible that the case $s=1/2$ for the spectral formulation is the subtle one, where additional logarithmic factors are needed. The right plot is devoted to a more detailed analysis of this case, where we assume that $u_{1/2}(x)\sim (x+1)|\ln(x+1)|^k$, as $x\to -1$. Then, $k\sim \frac {\ln(u_{1/2}(x)/(x+1)}{\ln|\ln(x+1)|}$ and it can be numerically estimated. We, again, use uniform grid, but this time a much finer one as $h=10^{-6}$, and we compute the series in \eqref{eq12} with higher accuracy, considering $m\le 10^6$. The plot of the first $20$ ratios clearly indicates that $k<1$, namely $k\sim 0.85$. This is also confirmed at the original coarse grid with $h=2^{-10}$ (see Table~\ref{tab:Boundary}). Therefore, for this particular right-hand-side ($f\equiv 1$) the general result of Caffarelli - Stinga, cited in Theorem~\ref{thm:Caffarelli}, can slightly be improved.

\begin{table}[]
\centering
 \begin{tabular}{|c|cc|cc|cc|}
\hline
\multirow{3}{*}{$s$} & \multicolumn{2}{|c|}{Riesz \eqref{eq3}} & \multicolumn{2}{|c|}{Spectral \eqref{eq12}} & \multicolumn{2}{|c|}{Spectral \eqref{eq12}}\\ \cline{2-7}
& \multicolumn{2}{|c|}{$\frac{u_s^R(-1+j\cdot h)}{(j\cdot h)^s}$} & \multicolumn{2}{|c|}{$\frac{u_s(-1+j\cdot h)}{(j\cdot h)^{\min(2s,1)}}$} & \multicolumn{2}{|c|}{$\frac{u_s(-1+j\cdot h)}{j\cdot h|\ln j\cdot h|^{0.85}}$}\\ 
& min &  max & min & max & min & max \\ \hline 
$0.25$ & 1.3386 & 1.3417 & 1.5004 & 1.5718 & -- & --  \\
$0.50$ & 1.4073 & 1.4139 & 3.2960 & 5.2026 & 1.0606 & 1.0717 \\
$0.75$ & 1.2559 & 1.2647 & 1.5669 & 1.6824 & -- & --  \\\hline    
\end{tabular}
\caption{Numerical validation of the steepness of the interface layers for $j=1\dots 20$, and $h=2^{-10}$.}\label{tab:Boundary}
\end{table}

For the case of inhomogeneous fractional Laplace problem with right-hand-side $\delta_0$ we illustrate the corresponding 1D solutions with respect to both formulations (see Fig.~\ref{fig:Delta1D}) and the corresponding 2D spectral solution (see Fig.~\ref{fig:Delta2D}) for various fractional powers $s$. In 1D, we consider $s=\{0.25,0.45,0.55\}$, as only for $s>1/4$, $u_0 \in L^2_{loc} (\mathbb{R})$ due to Remark~\ref{remark:L2 loc} and $w_{1,s}\in L_2(-1,1)$ (i.e., this is the smallest meaningful value of $s$ for both formulations of the particular problem), while $s=1/2$ serves as a point of singularity for both formulations, as $u_0(0)=w_{1,s}(0)=+\infty$ when $s<1/2$ and $u_0(0)=w_{1,s}(0)=0$ when $s>1/2$. Furthermore, for the boundary case $s=0.25$ the graph of $u_0$ is monotone in $(-1,0)$, while the graph of $w_{1,1/4}$ is oscillatory. The latter oscillating behavior increases for $s<1/4$ (note that we have already proven that $w_{1,s}\in C([-1,1]\setminus \{0\})$) and disappears for $s>1/4$. Apart from that, $w_{1,s}$ and $u_0$ are quite alike. In 2D, the observations are similar. The difference is, that the oscillating behavior of $w_{2,s}$ is strongly present on the lines $x=0$ and $y=0$ and disappears only for $s>0.75$, as illustrated in Fig.~\ref{fig:Delta2D}. 

\begin{figure}
\centering
\begin{tabular}{ccc}
$s=0.25$& $s=0.45$ & $s=0.55$\\
\includegraphics[width=0.32\textwidth]{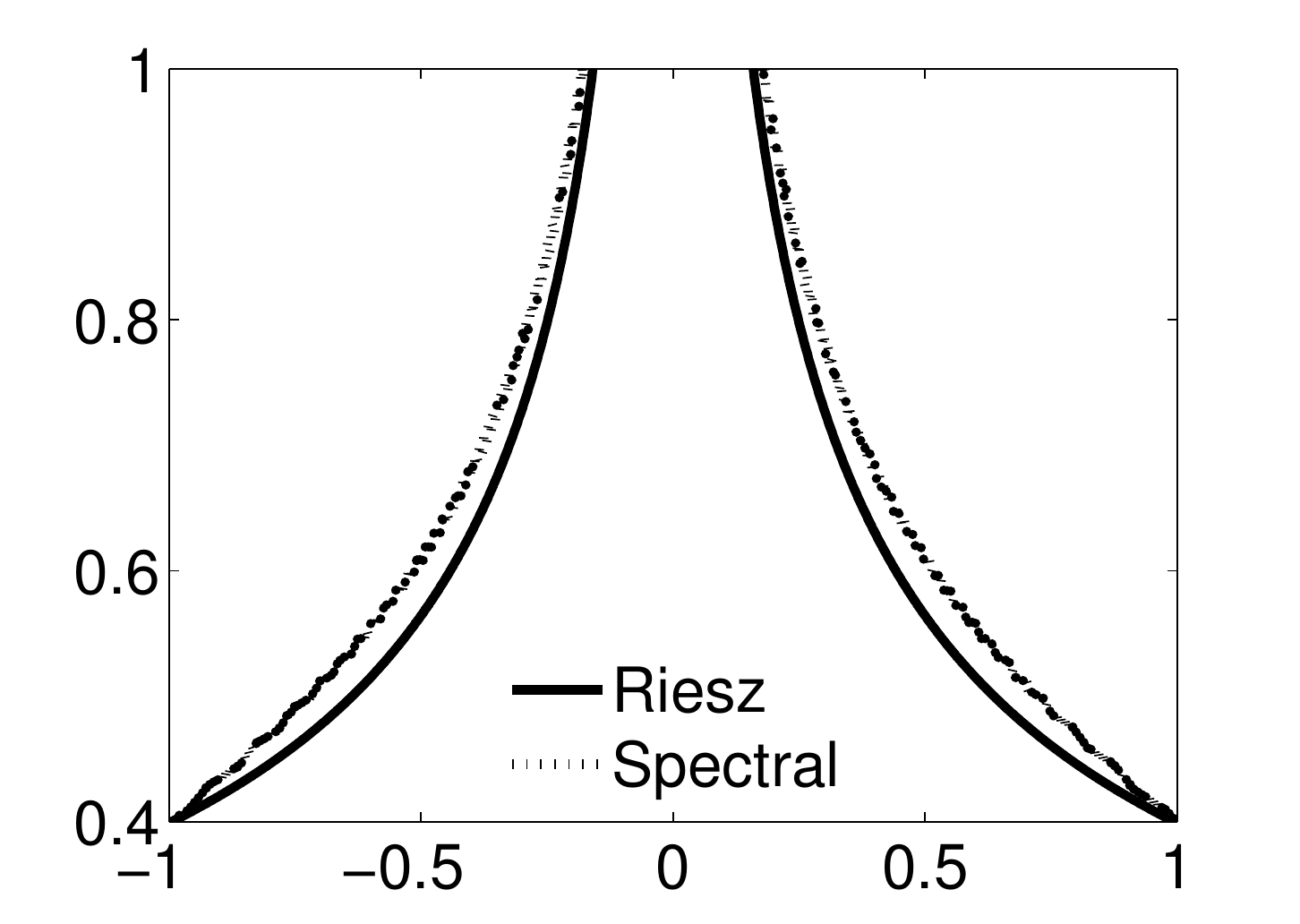} &
\includegraphics[width=0.32\textwidth]{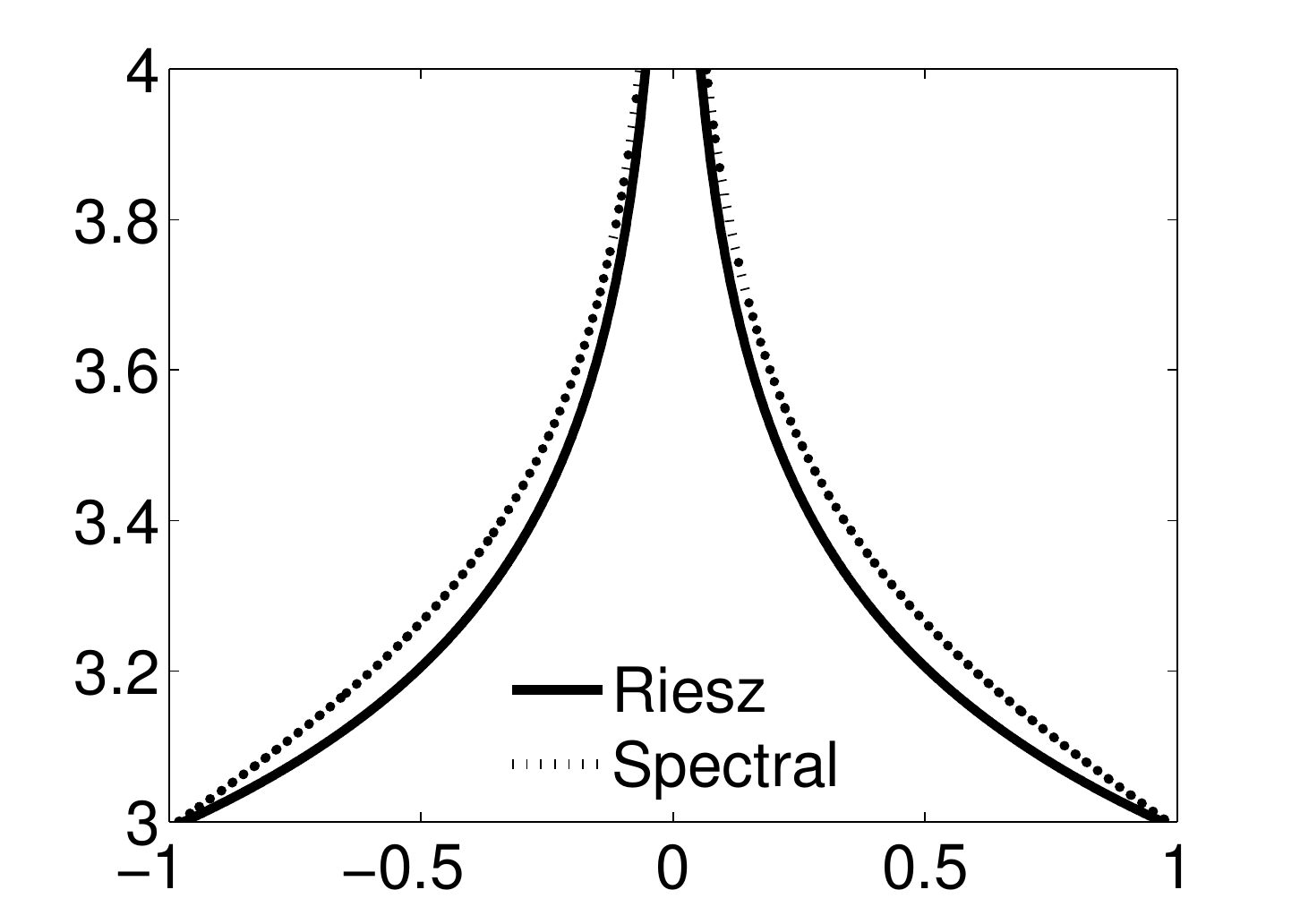} &
\includegraphics[width=0.32\textwidth]{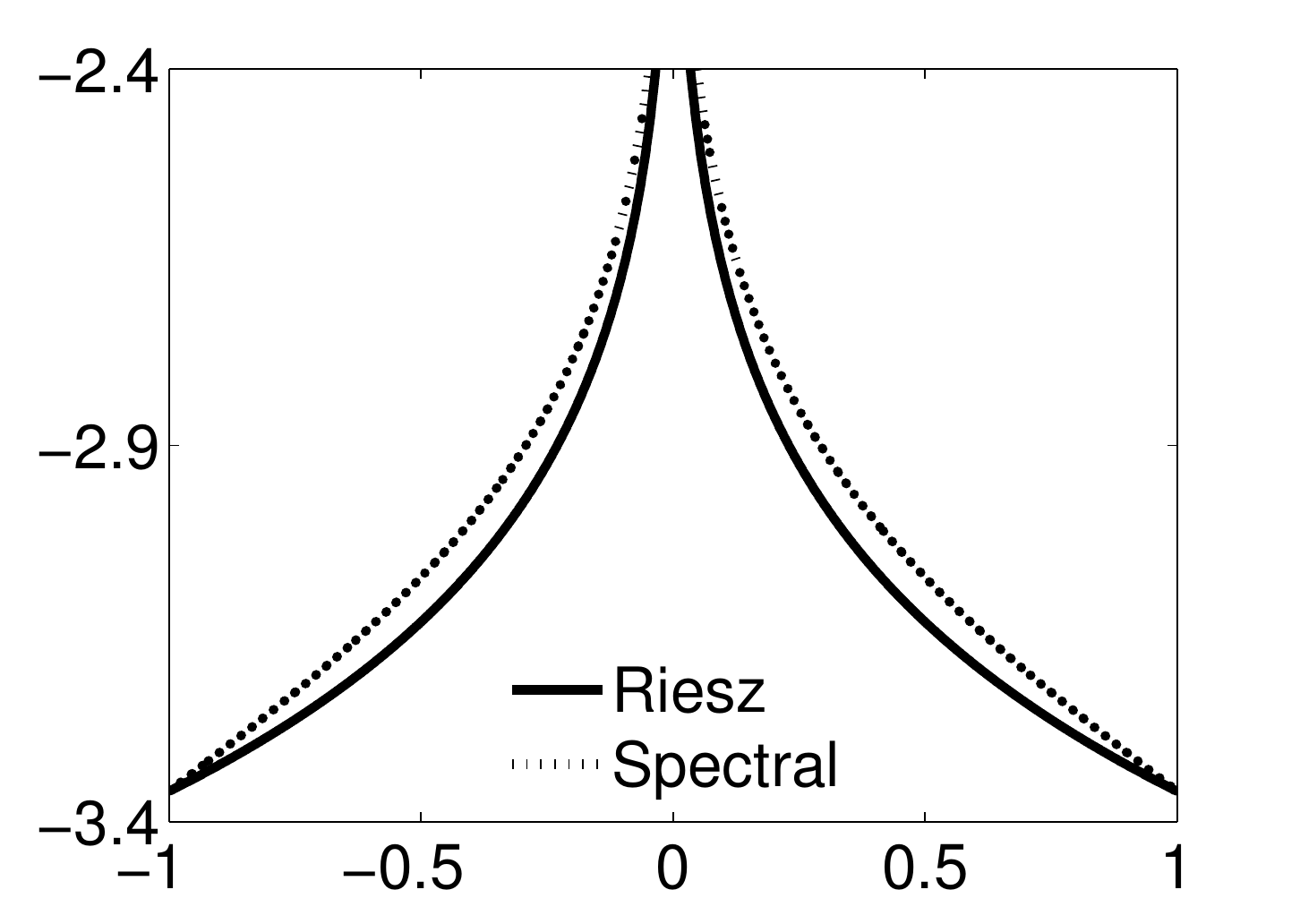}\\
\end{tabular}
\caption{Comparison of the solutions \eqref{eq3.7} and \eqref{3.16} in 1D for $s=\{0.25,0.45,0.55\}$.}\label{fig:Delta1D} 
\end{figure}

\begin{figure}
\centering
\begin{tabular}{ccc}
$w_{2,s}(x,y)$, $s=0.50$& $w_{2,s}(x,y)$, $s=0.60$ & $w_{2,s}(x,y)$, $s=0.75$\\
\includegraphics[width=0.32\textwidth]{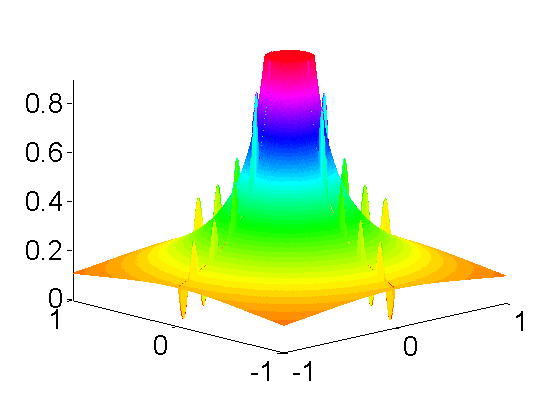} &
\includegraphics[width=0.32\textwidth]{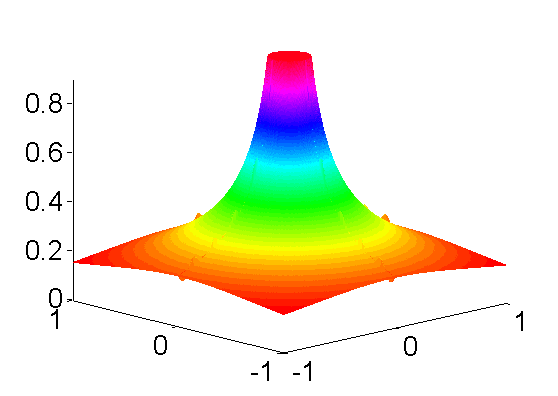} &
\includegraphics[width=0.32\textwidth]{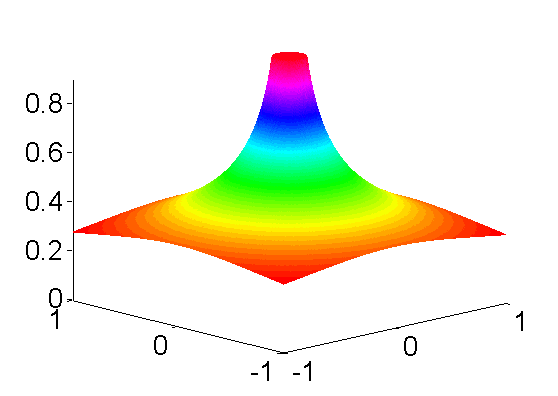}\\
$|u_0(x,y)-w_{2,s}(x,y)|$ & $|u_0(x,y)-w_{2,s}(x,y)|$ & $|u_0(x,y)-w_{2,s}(x,y)|$\\
\includegraphics[width=0.32\textwidth]{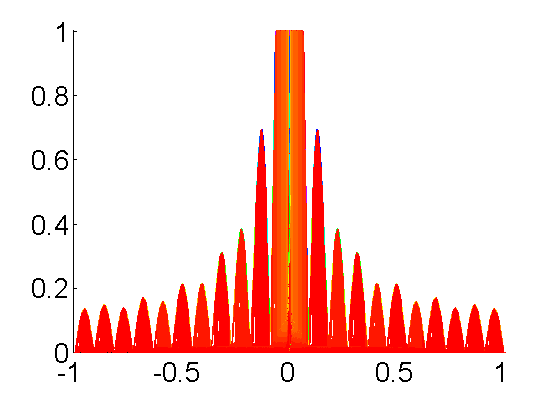} &
\includegraphics[width=0.32\textwidth]{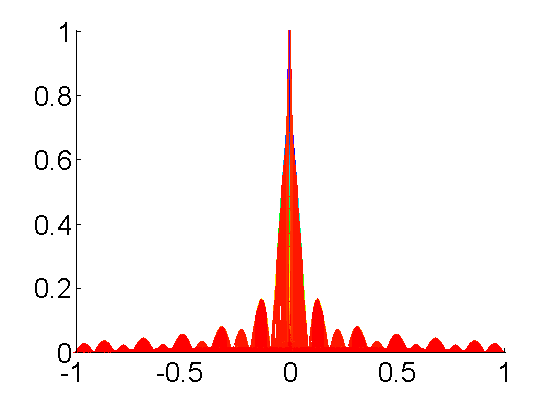} &
\includegraphics[width=0.32\textwidth]{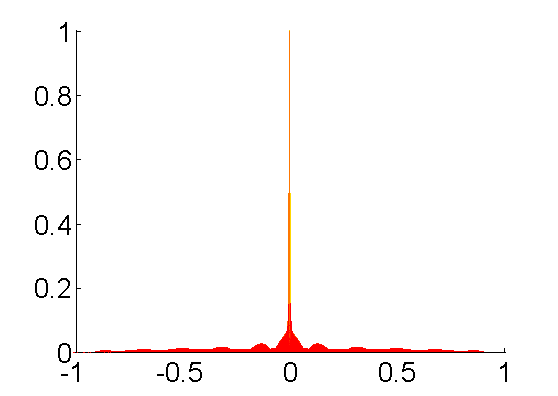}\\
\end{tabular}
\caption{Top: Visualization of the spectral solution \eqref{eq:Spectral2DDelta} for the 2D Dirac delta inhomogeneous fractional Laplace problem \eqref{3.17}, \eqref{3.18}, with $s=\{0.5,0.6,0.75\}$. Bottom: Difference image in the $(x,z)$-plane of the corresponding two solutions \eqref{eq3.7} and \eqref{eq:Spectral2DDelta}.}\label{fig:Delta2D} 
\end{figure}

\section{Concluding remarks}\label{sec:concl}
The detailed comparative analysis of the  Riesz and spectral 
formulations in the case of homogeneous boundary conditions and $f\equiv 1$ well 
demonstrates the difference between the corresponding solutions. In agreement with the 
theoretical estimates, the conducted numerical tests clearly illustrate the behavior of 
the boundary layers, additionally contributing to some better understanding of the 
Open problem formulated at the end of Section~\ref{sec:2}. The 
observation that the Riesz and spectral solutions could substantially differ 
far form the boundary layers is also an important one.

Our major theoretical contribution concerns the case of inhomogeneous boundary 
conditions when the right hand side $f$ is a Dirac $\delta$ function. Taking 
as Dirichlet data the boundary values of the fundamental Riesz solution we derive a 
detailed characterization of the solution of the spectral Laplacian obtained 
via  “harmonic lifting” approach. It is interesting to notice that in this 
setting, subject to the derived conditions related to the fractional power $s
$, the  Riesz and spectral solution are much closer. One possible explanation 
of this observation of the numerical tests is that there are no boundary 
layers in the considered particular test problems.

\section*{Acknowledgement}
The work has been partially supported by the Bulgarian National 
Science Fund under grant No. BNSF-DN12/1 and by the National Scientific Program "Information and Communication Technologies for a Single Digital Market in Science, Education and Security", financed by the Ministry of Education and Science. The research of N. Popivanov has been partially supported by the Bulgarian National Science Fund under grant No. DNTS-Russia 01/2/23.06.2017.

\end{document}